\title{Improved Bounds for Radial Projections in the Plane}
\author{Marianna Cs\"ornyei} 
\address{Department of Mathematics, University of Chicago, Chicago, IL 60637}  
\email{csornyei@math.uchicago.edu}
\author{D. M. Stull}
\address{Department of Mathematics, University of Chicago, Chicago, IL 60637}
\email{dmstull@uchicago.edu}
\theoremstyle{plain}
\newtheorem{theorem}{Theorem}[section]
\newtheorem*{claim*}{Claim}
\newtheorem{proposition}[theorem]{Proposition}
\newtheorem*{proposition*}{Proposition}
\newtheorem*{fact*}{Fact}
\newtheorem*{conjecture*}{Conjecture}
\newtheorem{lemma}[theorem]{Lemma}
\newtheorem*{lemma*}{Lemma}
\newtheorem*{question*}{Question}
\theoremstyle{definition}\newtheorem{remark}[theorem]{Remark}
\theoremstyle{definition}\newtheorem*{remark*}{Remark}
\theoremstyle{definition}\newtheorem{definition}[theorem]{Definition}
\theoremstyle{definition}\newtheorem*{definition*}{Definition}
\theoremstyle{definition}
\theoremstyle{definition}
\theoremstyle{definition}
\theoremstyle{definition}\newtheorem*{example*}{Example}
\renewcommand{\phi}{\varphi} 
\renewcommand{\epsilon}{\varepsilon} 
\def\eps{{\epsilon}}
\def\emp{{\emptyset}}
\def\N{\mathbb{N}} 
\def\R{\mathbb{R}} 
\def\P{\mathbb{P}} 
\def\S{\mathcal{S}}
\def\P{\mathcal{P}}
\def\T{\mathcal{T}}
\def\D{\mathcal{D}}
\begin{document}

\begin{abstract}
    We improve the best known lower bound for the dimension of radial projections of sets in the plane. We show that if $X,Y$ are Borel sets in $\R^2$, $X$ and $Y$ are not contained in the same line and $\dim_H(X)>0$, then
    $$\sup\limits_{x\in X} \dim_H(\pi_x Y) \geq \min\left\{(\dim_H(Y) + \dim_H(X))/2, \dim_H(Y), 1\right\},$$ 
where $\pi_x Y$ is the radial projection of the set $Y$ from the point $x$.
\end{abstract}

\maketitle
\section{Introduction}
Given a point $x\in\R^d$, the radial projection with respect to $x$ is the function $\pi_x : \R^d \setminus \{x\} \to \S^1$ defined by
\begin{center}
    $\pi_x y = \frac{y-x}{|x-y|}$.
\end{center}

Recently, there has been substantial progress in our understanding of how the Hausdorff dimension of a set is changed when projected radially onto a point \cite{Orponen19DimSmooth, OrpShmWan24, Ren23}. In particular, we focus on the following problem. Let $X, Y\subseteq\R^d$ be Borel sets. We would like to give sharp lower bounds on the quantity
\begin{center}
    $\sup\limits_{x\in X} \dim_H(\pi_x (Y\setminus \{x\}))$
\end{center}
in terms of the Hausdorff dimensions of $X$ and $Y$.

In this paper we will be concerned with radial projections in $\R^2$. There has also been exciting progress on this question in higher dimensions, e.g., \cite{Ren23, BriFuRen24}.

Orponen, Shmerkin and Wang \cite{OrpShmWan24} proved the following theorem on the dimension of radial projections in the plane.
\begin{theorem}\label{thm:OSW1}
    Let $X\subseteq\R^2$ be a Borel set which is not contained on any line. Then, for every Borel set $Y\subseteq\R^2$,
    \begin{center}
    $\sup\limits_{x\in X} \dim_H(\pi_x (Y\setminus \{x\})) \geq \min\{\dim_H(X), \dim_H(Y), 1\}$.
\end{center}
\end{theorem}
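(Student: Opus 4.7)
The plan is to argue by contradiction in the $\delta$-discretized category, using Frostman measures together with an incidence estimate for $\delta$-tubes through Frostman points. Set $s := \min\{\dim_H X,\dim_H Y,1\}$ and suppose, contrary to the claim, that there is $\eta>0$ with $\sup_{x\in X}\dim_H \pi_x(Y\setminus\{x\}) < s-\eta$. By Frostman's lemma I produce nonzero compactly supported measures $\mu$ on $X$ and $\nu$ on $Y$ with Frostman exponents $s_X,s_Y$ arbitrarily close to $\dim_H X,\dim_H Y$. The contradiction hypothesis then says that for each $x\in \mathrm{supp}(\mu)$ and each sufficiently small $\delta>0$, one can cover $\pi_x(Y\setminus\{x\})$ by $\lesssim \delta^{-(s-\eta)}$ arcs of length $\delta$; equivalently, there is a family $\mathcal{T}(x)$ of $\lesssim \delta^{-(s-\eta)}$ $\delta$-tubes through $x$ which captures nearly all of $\nu$. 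After a standard multiscale pigeonhole, these bounds, and the captured $\nu$-mass, can be made uniform over $x$ in a $\mu$-positive subset of $X$ at a single scale $\delta$.

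The hypothesis that $X$ is not contained in a line enters next. At scale $\delta$, a Frostman measure $\mu$ not concentrated on any line admits a triple $x_1,x_2,x_3 \in \mathrm{supp}(\mu)$ which is \emph{quantitatively non-collinear}: the triangle $x_1x_2x_3$ has area bounded below by a fixed power of $\delta$. For such a triple, any two tubes $T_i\in\mathcal{T}(x_i)$ and $T_j\in\mathcal{T}(x_j)$ through distinct apexes either have nearly the same direction or meet transversally, with their intersection confined to a parallelogram of controlled aspect ratio. This gives a robust quantitative transversality between the three tube families and prevents them from coincidentally aligning.

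The heart of the proof is then a $\delta$-discretized incidence inequality of Orponen--Shmerkin type: given a Frostman measure $\nu$ of exponent $s_Y$ on $Y$ and a family $\mathcal{T}$ of $\delta$-tubes through a common apex of cardinality $|\mathcal{T}|\leq \delta^{-t}$ with $t<\min\{s_Y,1\}$, one has $\nu(\bigcup \mathcal{T})\lesssim \delta^{\varepsilon'}$ for some $\varepsilon'>0$ depending on the gap $\min\{s_Y,1\}-t$. Applying this with $t=s-\eta$ to each of $\mathcal{T}(x_1),\mathcal{T}(x_2),\mathcal{T}(x_3)$, and using the transversality of the previous step to combine the contributions without double counting, contradicts the uniform lower bound on $\nu(\bigcup \mathcal{T}(x))$ once $\delta$ is taken small enough. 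This closes the contradiction and yields $\sup_{x\in X}\dim_H \pi_x(Y\setminus\{x\})\geq s$.

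The main obstacle is precisely that incidence inequality. The classical Kaufman/Marstrand $L^2$ bounds only produce $\dim_H X + \dim_H Y - 1$, which is strictly weaker than $\min\{\dim_H X,\dim_H Y,1\}$ whenever $\dim_H X + \dim_H Y < 2$. Reaching the sharp bound requires the $\varepsilon$-improvement technology of Orponen and Shmerkin, which rests on Shmerkin's $L^q$-inverse theorem for convolutions and a careful multiscale analysis separating flat from branching regimes. Once this discretized incidence input is in hand, the non-collinearity hypothesis on $X$ enters only in the very controlled fashion above, and the remainder of the argument is essentially bookkeeping between Frostman exponents, multiscale pigeonholing, and $\delta\to 0$.
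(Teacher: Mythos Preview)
This theorem is not proved in the present paper: it is quoted from Orponen--Shmerkin--Wang \cite{OrpShmWan24} as background. The paper instead proves the stronger Theorem~\ref{thm:mainthm1}, which (together with the trivial case $\dim_H X=0$) implies Theorem~\ref{thm:OSW1}. So there is no ``paper's own proof'' of Theorem~\ref{thm:OSW1} to compare against, and the route the paper does take to its stronger result is entirely different from yours: it uses the point-to-set principle, Kolmogorov complexity of lines, the thin-tubes result (Theorem~\ref{cor:OrpShmWan}) to manufacture points satisfying (C1)--(C3), and the Ren--Wang Furstenberg incidence bound (Theorem~\ref{thm:RenWang}) inside an effective argument, followed by a bootstrap between the quantities $s_x$ and $s_y$. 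None of your Frostman/pigeonhole/three-apex machinery appears.

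On the merits of your sketch, there is a genuine gap. The ``incidence inequality of Orponen--Shmerkin type'' you state is false as written: if $\nu$ is (say) one-dimensional Frostman measure on a line segment $\ell$ and the apex $x$ lies on $\ell$, then a \emph{single} $\delta$-tube through $x$ carries all of $\nu$, so $|\mathcal T|\le \delta^{-t}$ with $t<1$ certainly does not force $\nu(\bigcup\mathcal T)\lesssim\delta^{\varepsilon'}$. What is actually needed is a non-concentration hypothesis of the form $\nu(T)\lesssim r^\sigma$ for every tube $T$ through $x$ of width $r$; establishing this ``thin tubes'' condition for a $\mu$-positive set of apexes is itself one of the main theorems of \cite{OrpShmWan24} (and is exactly Theorem~\ref{cor:OrpShmWan} here). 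Your proposal skips this step entirely, and the three-non-collinear-points device you describe does not supply it: as you have set things up, each family $\mathcal T(x_i)$ already captures nearly all of $\nu$, so a bound on a single family would already give the contradiction and the transversality between the three families is never used. In short, the missing idea is the thin-tubes/non-concentration input, not merely a harder version of the tube-counting inequality you wrote down.
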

This theorem is sharp whenever $\dim_H(Y) = \dim_H(X)$. In the same paper \cite{OrpShmWan24}, they also proved the following theorem.
\begin{theorem}\label{thm:OSW2}
    Let $X\subseteq\R^2$ be a non-empty Borel set. Then, for every Borel set $Y\subseteq\R^2$, such that $\dim_H(Y) > 1$,
    \begin{center}
    $\sup\limits_{x\in X} \dim_H(\pi_x (Y\setminus \{x\})) \geq \min\{\dim_H(X)+\dim_H(Y)-1, 1\}$.
\end{center}
\end{theorem}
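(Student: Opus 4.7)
My strategy is to reduce the theorem to the following exceptional set estimate for radial projections: if $t > 1$ and $\nu$ is a Borel probability measure on $\R^2$ with $I_t(\nu) < \infty$, then for every $\sigma \in [0, 1)$,
\[
\dim_H\bigl(\{x \in \R^2 : \dim_H(\pi_x(\operatorname{spt}\nu)) < \sigma\}\bigr) \,\leq\, \sigma + 1 - t.
\]
Given this, the theorem follows by a short argument. Set $s = \dim_H X$, fix any $\sigma < \min\{s+t-1, 1\}$, and choose a Frostman measure $\nu$ on $Y$ with $I_{t'}(\nu) < \infty$ for some $t' \in (1, t]$ close enough to $t$ that $\sigma < s + t' - 1$. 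The exceptional set $E_\sigma := \{x : \dim_H(\pi_x Y) < \sigma\}$ then has Hausdorff dimension at most $\sigma + 1 - t' < s$, so $X \not\subseteq E_\sigma$, producing some $x \in X$ with $\dim_H(\pi_x Y) \geq \sigma$. Letting $\sigma \uparrow \min\{s+t-1, 1\}$ finishes the proof.

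To prove the exceptional set estimate, I argue by contradiction. Suppose $\mu$ is a Frostman measure on $E_\sigma$ with $I_{s'}(\mu) < \infty$ for some $s' > \sigma + 1 - t$. Since $I_\sigma((\pi_x)_*\nu) = \infty$ for $\mu$-a.e.\ $x \in E_\sigma$, it suffices to contradict this by establishing the finiteness of the triple integral
\[
\mathcal{I} := \iiint |\pi_x y - \pi_x y'|^{-\sigma}\, d\mu(x)\, d\nu(y)\, d\nu(y').
\]
After localizing to a bounded region where $|x-y|, |x-y'| \asymp 1$ (with the near-diagonal $y \approx y'$ and the singularities at $x \in \{y, y'\}$ handled by routine dyadic decomposition), I use the geometric identity $|\pi_x y - \pi_x y'| \asymp d(x, L_{yy'}) \cdot |y-y'|/(|x-y||x-y'|)$ to reduce $\mathcal{I}$ to a dyadic sum
\[
\mathcal{I} \,\lesssim\, \sum_{k \geq 0} 2^{k\sigma} \iint \mu\bigl(T_{2^{-k}}(L_{yy'})\bigr)\, |y-y'|^{-\sigma}\, d\nu(y)\, d\nu(y'),
\]
where $T_r(L)$ denotes the $r$-neighborhood of the line $L$.

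The central step, and main obstacle, is an incidence-type bound of the form $\iiint \mathbf{1}_{d(x, L_{yy'}) < r}\, d\mu\, d\nu\, d\nu \lesssim r^{s' + t' - 1 - o(1)}$, which simultaneously exploits the Frostman properties of both $\mu$ and $\nu$. This bound encodes a Furstenberg-type phenomenon: for generic $x$, the width-$r$ tubes through $x$ can contain at most $r^{t'-1}$ worth of $\nu$-mass (this uses $t' > 1$), and as $x$ varies over a Frostman set of dimension $s'$, these tube-masses are distributed across enough directions to yield an additional factor of $r^{s'}$. The cleanest implementation is via $\delta$-discretization: replacing $X$ and $Y$ by $(\delta, s', C)$- and $(\delta, t', C)$-sets in the Katz--Tao sense, counting incidences between $X$-points and $\delta$-tubes supported on pairs in $Y$ through a Szemer\'edi--Trotter or Furstenberg-set type bound, and summing dyadically. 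Careful bookkeeping of exponents then yields convergence of the dyadic sum precisely when $\sigma < s' + t' - 1$, contradicting the choice $s' > \sigma + 1 - t$.
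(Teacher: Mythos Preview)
The paper does not give its own proof of this statement: Theorem~\ref{thm:OSW2} is quoted as a result of Orponen--Shmerkin--Wang and only used as background. The paper's contribution is Theorem~\ref{thm:mainthm1}, whose proof (via Kolmogorov complexity, the point-to-set principle, and the Ren--Wang incidence bound of Theorem~\ref{thm:RenWang}) is entirely different from your outline and yields Theorem~\ref{thm:OSW2} as a corollary. So there is no direct ``paper proof'' to compare against; your approach is closer in flavour to the measure-theoretic arguments of \cite{OrpShmWan24} than to anything in this paper.

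That said, your sketch has a genuine gap at the step you yourself flag as the main obstacle. The incidence bound you assert,
\[
\iiint \mathbf{1}_{\{d(x,L_{yy'})<r\}}\,d\mu(x)\,d\nu(y)\,d\nu(y') \;\lesssim\; r^{\,s'+t'-1-o(1)},
\]
is false as stated. Take $\nu$ to be Lebesgue measure on $[0,1]^2$ (so $I_{t'}(\nu)<\infty$ for every $t'<2$) and $\mu$ any probability measure supported well away from $[0,1]^2$ with $I_{s'}(\mu)<\infty$ for some $s'>0$. For fixed $x$ and $y$, the set of $y'$ with $d(x,L_{yy'})<r$ is essentially an $r$-tube around the line through $x$ and $y$, so the triple integral is $\asymp r$; but your claimed upper bound is $r^{s'+t'-1}$, which for $t'$ close to $2$ and $s'>0$ is $\ll r$. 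The bound therefore cannot hold, and in particular cannot be obtained by ``a Szemer\'edi--Trotter or Furstenberg-set type bound'' as you suggest. Relatedly, in passing from the dyadic sum for $\mathcal{I}$ to the incidence bound you silently drop the factor $|y-y'|^{-\sigma}$, and it is not explained how that factor is recovered when summing in $k$.

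The reduction to an exceptional-set estimate and the energy set-up are sound, and this is indeed the right general shape for a classical proof. But the substantive content of Theorem~\ref{thm:OSW2} lies exactly in the incidence/Furstenberg input, and your sketch neither states a correct version of that input nor indicates how it would be proved. In \cite{OrpShmWan24} this step is handled by a bootstrapping argument combining thin-tube estimates with discretized projection theorems, not by a single triple-integral bound of the form you wrote; you would need to supply a comparable ingredient for the argument to go through.
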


In this paper, we improve their lower bounds, for the case where $\dim_H(X) > 0$. We prove that:
\begin{theorem}\label{thm:mainthm1}
    Let $X\subseteq\R^2$ be a Borel set such that $\dim_H(X) > 0$. Let $Y\subseteq\R^2$ be a Borel set such that $X$ and $Y$ are not contained on the same line. Then,
    \begin{equation}\label{our}
        \sup\limits_{x\in X} \dim_H(\pi_x (Y\setminus \{x\})) \geq \min\left\{(\dim_H(Y) + \dim_H(X))/2, \dim_H(Y), 1\right\}.
    \end{equation}
\end{theorem}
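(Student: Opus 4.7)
Set $s := \dim_H X$ and $t := \dim_H Y$. The bound \eqref{our} improves Theorem \ref{thm:OSW1} precisely when $s < t$ and $s < 1$, so we may restrict to $0 < s < t$ with $(s+t)/2 < \min\{t, 1\}$; in all other cases the conclusion either follows from Theorem \ref{thm:OSW1} or is trivial. Write $\sigma = (s+t)/2$ for the target bound. The plan is to argue by contradiction: assume $\dim_H \pi_x(Y \setminus \{x\}) < \sigma$ for every $x \in X$, and derive an incompatibility by a quantitative scale-by-scale analysis.

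Following the effective-dimension methodology developed by Stull for related projection problems, we would invoke the point-to-set principle to pass from the Hausdorff-dimension statement to a pointwise statement about Kolmogorov complexity relative to a Hausdorff-optimal oracle $A$. We would choose $y \in Y$ with $\dim^A(y)$ close to $t$ and $x \in X$ with $\dim^{A, y}(x)$ close to $s$, so that the pair $(x, y)$ is informationally independent; the hypothesis that $X$ and $Y$ are not contained in a common line lets us make this choice so that $x$ avoids the exceptional line through $y$ in the relevant direction. The core step is an estimate of $K^A_r(\pi_x y \mid x)$ scale-by-scale, using the identity
$$|\pi_x y - \pi_x y'| \asymp \frac{|y - y'|\, d(x, \ell_{y, y'})}{|x-y|\,|x-y'|},$$
which expresses the dimension loss of $\pi_x$ in terms of the proximity of $x$ to lines through $y$. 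At each scale the loss is bounded by $\dim^A(x) \approx s$, while the gains from the \emph{good} scales sum to $\dim^{A, x}(y) \approx t$; optimizing the allocation at the critical precision produces the arithmetic-mean $(s+t)/2$.

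An alternative (and perhaps parallel) route proceeds via Frostman-measure energy integrals. With $(s - \eta)$- and $(t - \eta)$-Frostman measures $\mu$ on $X$ and $\nu$ on $Y$, the goal becomes showing $\int I_\sigma(\pi_x \nu)\, d\mu(x) < \infty$ for every $\sigma < (s+t)/2$, which via the same geometric identity reduces to estimating
$$\iint \frac{d\nu(y)\, d\nu(z)}{|y-z|^{\sigma}} \int \frac{(|x-y|\,|x-z|)^{\sigma}\, d\mu(x)}{d(x, \ell_{y,z})^{\sigma}},$$
i.e., to a $\nu\otimes\nu$-averaged control of the $\sigma$-th moment of the distance of $\mu$ to the line $\ell_{y, z}$.

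The main obstacle in either approach is to extract a gain strictly beyond the naïve Frostman tube bound $\mu(T^{\delta}_{\ell}) \lesssim \delta^{s-1}$, which is vacuous for $s < 1$. One must exploit both $\dim_H X > 0$ (a hypothesis considerably weaker than $\dim X + \dim Y > 1$ of Theorem \ref{thm:OSW2}) and the non-collinearity assumption on $X$ and $Y$ in an essential way. The coefficient $\tfrac{1}{2}$ in $(s+t)/2$ should arise from a balanced partition of the \textbf{Frostman budget} between $\mu$ and $\nu$, and we expect the argument to refine the bilinear incidence and $L^2$ high--low techniques of Orponen--Shmerkin--Wang to the regime where neither $\dim X$ nor $\dim Y$ alone is large enough for their methods to apply directly.
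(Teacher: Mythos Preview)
Your proposal is not a proof; it is a sketch of two plausible strategies, and you explicitly flag the step you do not know how to carry out (``The main obstacle \ldots\ we expect the argument to refine \ldots''). Neither strategy, as written, contains a mechanism that actually produces the exponent $(s+t)/2$.

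Concretely: the scale-by-scale complexity argument you outline --- bounding $K^A_r(\pi_x y\mid x)$ via the geometric identity $|\pi_x y-\pi_x y'|\asymp |y-y'|\,d(x,\ell_{y,y'})/(|x-y|\,|x-y'|)$ --- is the standard approach behind earlier results such as Theorem~\ref{thm:OSW1}, and by itself it yields only $\min\{s,t,1\}$. Your sentence ``optimizing the allocation at the critical precision produces the arithmetic mean $(s+t)/2$'' is not justified; there is no allocation argument in this framework that converts a per-scale loss of $s$ into a global gain of $(s+t)/2$. Likewise, the energy-integral route reduces to controlling $\int d(x,\ell_{y,z})^{-\sigma}\,d\mu(x)$, and you correctly note that the Frostman tube bound is vacuous here; you offer no replacement.

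The paper's proof is genuinely different from both of your outlines. The $(s+t)/2$ does not come from a projection/energy argument at all: it is imported directly from the Ren--Wang discretized incidence bound (Theorem~\ref{thm:RenWang}), whose exponent is $\min\{t,(s+t)/2,1\}$ --- the Furstenberg exponent. The paper first proves an effective version of this (Theorem~\ref{thm:effthm1}): if $x\in\ell$, $\dim_H(x)>t$, $\dim_H^x(\ell)>s$, then $K_n(\ell)\ge K_n(\ell\mid x)+\min\{t,(s+t)/2,1\}n-O(\epsilon n)$. It then establishes, via symmetry of information and a careful choice of $x,y$ using Theorem~\ref{cor:OrpShmWan}, the complementary inequalities $K_m^A(\ell)\lessapprox K_m^A(\ell\mid x)+K_{m,n}^A(\ell\mid y)$ and $K_n^A(\ell)\lessapprox K_n^A(\ell\mid y)+K_n^{A,x}(\ell)$. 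Combining these with Theorem~\ref{thm:effthm1} (applied once at $x$ and once at $y$) gives a pair of coupled inequalities in $s_x=\dim_H^{A,x}(\ell)$ and an analogous $s_y$; substituting one into the other and simplifying forces $s_x\ge\min\{t_y,(t_x+t_y)/2,1\}-O(\epsilon)$. The arithmetic mean thus arises from the Furstenberg/Ren--Wang exponent together with a two-step bootstrap, not from any direct projection-loss accounting.
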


We note that Theorem \ref{thm:mainthm1} also improves the result of Ren and Wang giving sharp bounds on the exceptional sets for orthogonal projections in $\R^2$. They proved the following in \cite{RenWan23}.
\begin{theorem}\label{ort}
    Let $Y\subseteq\R^2$ be a Borel set. Then, for all $0\leq u \leq \min\{\dim_H(Y), 1\}$, we have
    \begin{center}
        $\dim_H(\{\theta \in \mathcal{S}^1\mid \dim_H(p_\theta Y) < u\}) \leq \max\{2u - \dim_H(Y), 0\}$,
    \end{center}
    where $p_\theta$ is the orthogonal projection map.
\end{theorem}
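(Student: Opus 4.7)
The plan is to derive Theorem~\ref{ort} from Theorem~\ref{thm:mainthm1} by a projective-duality argument, exploiting the classical identification of orthogonal projection in direction $\theta^\perp$ with radial projection from the point at infinity $[\theta^\perp] \in \ell_\infty$. I would suppose, for contradiction, that $\dim_H E > \max\{2u - \dim_H Y, 0\}$, where $E := \{\theta \in \mathcal{S}^1 : \dim_H p_\theta Y < u\}$. Writing $E = \bigcup_n \{\theta : \dim_H p_\theta Y \leq u - 1/n\}$, I pass to a Borel subfamily $E_\epsilon \subseteq E$ on which $\dim_H p_\theta Y \leq u - \epsilon$ for some fixed $\epsilon > 0$, while $\dim_H E_\epsilon$ still exceeds $\max\{2u - \dim_H Y, 0\}$. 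By the countable stability of Hausdorff dimension I further reduce to the case that $Y$ is bounded and not contained in any line, and that $E_\epsilon^\perp$ is contained in a closed arc of $\mathcal{S}^1$ staying uniformly away from two fixed directions (the base points for the projective map chosen below).

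Second, I fix a projective automorphism $T$ of $\mathbb{P}^2$ sending the line at infinity $\ell_\infty$ to a finite line $L \subset \R^2$; for concreteness $T[X\!:\!Y\!:\!Z] = [X\!:\!Z\!:\!Y]$ sends $\ell_\infty$ to the $x$-axis. Set $Y' := T(Y)$ and $X := T(E_\epsilon^\perp) \subseteq L$, where each $\theta^\perp \in \mathcal{S}^1$ is identified with $[\theta^\perp] \in \ell_\infty$. On the compacta of interest $T$ restricts to a diffeomorphism, so $\dim_H Y' = \dim_H Y$ and $\dim_H X = \dim_H E_\epsilon$; moreover $X \subseteq L$ while $Y' \not\subseteq L$, so the hypotheses of Theorem~\ref{thm:mainthm1} hold for the pair $(X, Y')$. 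The key geometric fact is that for each $\theta \in E_\epsilon$, the pencil of affine lines parallel to $\theta^\perp$ (whose natural parameter is exactly $p_\theta y$) is mapped by $T$ to the pencil of lines through $x := T([\theta^\perp]) \in X$. A direct computation in projective coordinates shows that the induced parameter change is bi-Lipschitz, with constants uniform over our compacta, so $\dim_H \pi_x Y' = \dim_H p_\theta Y \leq u - \epsilon$ for every $x \in X$.

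Applying Theorem~\ref{thm:mainthm1} to $(X, Y')$ yields
\[
    \sup_{x \in X} \dim_H \pi_x Y' \;\geq\; \min\Bigl\{\tfrac{\dim_H Y' + \dim_H X}{2},\ \dim_H Y',\ 1\Bigr\}.
\]
Since $\dim_H X > 2u - \dim_H Y$, the first term strictly exceeds $u$; since $u \leq \min\{\dim_H Y, 1\}$, the other two terms are at least $u$. Hence the right-hand side is $\geq u$, contradicting the upper bound $\sup_{x \in X} \dim_H \pi_x Y' \leq u - \epsilon$ from the previous paragraph. The main technical obstacle I anticipate is the bi-Lipschitz identification of $\pi_x Y'$ with $p_\theta Y$ under $T$: though the pencil bijection is classical, one must track how the bi-Lipschitz constants of the projective reparametrization depend on the separation conditions isolated in the first paragraph, so that a single choice of $T$ works uniformly for all $x \in X$ arising from a given decomposition piece.
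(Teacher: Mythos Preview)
Your proposal is correct and follows essentially the same route as the paper. The paper does not give a detailed proof of Theorem~\ref{ort}; it simply notes that the orthogonal-projection statement is equivalent to the radial-projection statement ``if $X\subseteq\ell$, $Y$ disjoint from $\ell$, and $\dim_H X>\max\{2u-\dim_H Y,0\}$, then $\sup_{x\in X}\dim_H(\pi_x Y)\ge u$'' and that this follows immediately from Theorem~\ref{thm:mainthm1}. Your projective-duality argument (moving $\ell_\infty$ to a finite line and checking the bi-Lipschitz reparametrization of pencils) is precisely the standard justification of that equivalence, which the paper leaves implicit.
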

Equivalently, if $X\subseteq \ell$, for some line $\ell \in\R^2$, $Y$ is disjoint from $\ell$, and $\dim_H(X) > \max(2u-\dim_H(Y),0)$ for $0 \leq u \leq \min\{\dim_H(Y), 1\}$, then $$\sup_{x\in X} \dim_H(\pi_x Y) \geq u.$$ It is immediate to check that our Theorem \ref{thm:mainthm1} implies the same result, moreover, it implies Theorem \ref{ort} \emph{without using the additional assumption that $X$ is contained in a line}.

\begin{remark}An elementary calculation also shows that our lower estimate is always at least as large as the lower estimate in Theorem \ref{thm:OSW1}, and it is strictly larger when $\dim_H(X)< \min\{\dim_H(Y),1\}$. Similarly, the estimate in \eqref{our} is always at least as large as the estimate in Theorem \ref{thm:OSW2}, and it is strictly larger in the case when in Theorem \ref{thm:OSW2} we have $\dim_H(X)+\dim_H(Y)-1<1$.
\end{remark}

The main new ingredient of the proof of Theorem \ref{thm:mainthm1} is a surprising inequality bounding Kolmogorov complexity of a line, which may be of independent interest. In particular we show in Theorem \ref{thm:boundComplexityLine}, that, if $X,Y\subseteq\R^2$ are compact sets of positive dimension, then we can always find a suitable oracle $A$ and large effective dimensional points $x\in X$ and $y\in Y$ such that, for all sufficiently large $n$, $K_n^A(\ell)\le K_n^A(\ell\mid x)+K_n^A(\ell\mid y)+O(\log n)$, where $\ell$ is the line containing $x$ and $y$. (For a more precise statement see Theorem \ref{thm:boundComplexityLine}.)
The proof of Theorem \ref{thm:mainthm1} follows by combining this result and the incident bound due to Ren and Wang \cite{RenWan23} which was used in their solution to the Furstenberg set problem.

We conclude this introduction by noting that improvements on the dimension of radial projections is closely related to many problems in geometric measure theory. Progress on radial projections played a  crucial role in the work of Orponen and Shmerkin, and Ren and Wang on Furstenberg sets and projections \cite{OrpShm23, RenWan23}, as well as the work of Wang and Zahl on sticky Kakeya sets \cite{WanZah22}, which in turn was a crucial step in the solution of the Kakeya problem in $\R^3$ \cite{WanZah25}. 

Radial projections were also important in most of the recent progress on the distance set problem  \cite{Shmerkin20,KelShm19,Stull22c, DuOuRenZhang23a, FieStu23}. For example, in a forthcoming paper, along with Cholak, Lutz, Lutz and Mayordomo, we will use the results of this paper to improve the best known bound of pinned distance sets for planar sets whose dimension is at most one. In particular, the forthcoming paper proves the following theorem.
\begin{theorem}\label{thm:distanceBound}
    Let $E\subseteq\R^2$ be Borel and $s =\dim_H(E) \leq 1$. Then
    \begin{center}
        $\sup_{x\in E} \dim_H(\Delta_x E) \geq \frac{3s}{4}$,
    \end{center}
    where $\Delta_x y = |x - y|$ for $y\in \R^2$.
\end{theorem}
Previously, Shmerkin and Wang \cite{ShmWan25} had proved that 
\begin{equation}\label{eq:ShmWangBound}
    \sup\limits_{x\in E} \dim_H(\Delta_x E) \geq \frac{s}{2} + \frac{s^2}{2(2+\sqrt{s^2 +4})},
\end{equation}
for every Borel set $E\subseteq\R^2$ with $\dim_H(E) = s \leq 1$. Fiedler and Stull \cite{FieStu24} improved this bound, so long as $s$ was sufficiently large. They proved that
\begin{equation}
    \sup\limits_{x\in E}\dim_H(\Delta_x(E))\geq s \left(1 - \frac{2-s}{2(1+2s-s^2)}\right),
\end{equation}
which is larger than (\ref{eq:ShmWangBound}) so long as $s \geq .48$. We note that Theorem \ref{thm:distanceBound} gives an improved bound of both results for all $0 < s < 1$.

\section{Preliminaries}

\subsection{Discretized incidences}

We will use the standard point-line duality in the plane. 

Without loss of generality, we can assume in Theorem \ref{thm:mainthm1} that $X,Y\subset [0,1]^2$. Also, without loss of generality, it is enough to consider the directions of those lines joining points of $X$ and $Y$ whose dual is in $[0,1]^2$.

Let $\delta$ be a dyadic scale. We denote the set of dyadic cubes in $[0,1]^2$ of side length $\delta$ by $\D_\delta$. 
We call a set of lines a $\delta$-tube if it is the dual set of the points covered by a cube $Q\in\D_\delta$. We denote the set of $\delta$-tubes by $\T^\delta$.

We say that a $\delta$-cube $Q$ intersects a $\delta$-tube $T$, if $T$ contains a line that meets $Q$, and we denote this by $T\cap Q\neq \emptyset$.

In \cite{RenWan23} Ren and Wang proved sharp bounds on the incidences between $\delta$-tubes and $\delta$-cubes, leading to a resolution of the Furstenberg set problem. We will use their work to prove our main theorem. They introduced the following definitions, and proved Theorem \ref{thm:RenWang} below (see Theorem 4.1 in \cite{RenWan23}).

For any bounded set $A\subseteq\R^d$, and dyadic scale $\delta$, denote
\begin{center}
    $\D_\delta(A) = \{Q\in\D_\delta \mid Q \cap A \neq \emptyset\}$
\end{center}
and 
\begin{center}
    $\vert A \vert_\delta = \vert \D_\delta(A)\vert$.
\end{center}

Let $s\in[0,d]$ and $C > 0$. A non-empty bounded set $A\subseteq\R^d$ is called a $(\delta, s, C)$-set if it satisfies the following Frostman type condition: 
\begin{equation}
    \vert A \cap B(x,r)\vert_\delta \leq Cr^s \vert A \vert_\delta \;\;\; \forall x \in \R^d, \; r \in [\delta, 1].
\end{equation}

When $\P\subseteq \D_\delta$, we say that $\P$ is a $(\delta, s, C)$-set if the set of points in $\R^2$ covered by the cubes in $\P$ is a $(\delta, s, C)$-set.
A set of $\delta$-tubes $\T$ is called a $(\delta, s, C)$-set if its dual is a $(\delta, s, C)$-set.

\medskip

\begin{theorem}\label{thm:RenWang}
   For every $\epsilon > 0$, $s\in (0,1]$, and $t\in (0, 2]$ there exists $\eta > 0$ s.t. for every small enough $\delta>0$ and for every $M\in\N$ the following holds.
   
   Suppose that  $\mathcal{P}$ is a $(\delta, t, \delta^{-\eta})$-set, and suppose that for each $Q\in\P$ there is a $(\delta, s, \delta^{-\eta})$-set of tubes $T(Q)$ such that each tube in $T(Q)$ intersects $Q$, and $|T(Q)|\sim M$. Then for $\T:=\bigcup_{Q\in\P} T(Q)$ we have
   \begin{equation}
       \vert \mathcal{T}|\gtrsim(\log \eps^{-1})^{-c}\delta^{-\min\{t, \frac{s+t}{2}, 1\} + \epsilon} M
   \end{equation}
for some absolute constant $c$.
\end{theorem}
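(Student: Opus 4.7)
The plan is to dualize the problem and turn it into a planar Furstenberg-type incidence estimate, then prove that estimate by a multi-scale induction whose engine is the Ren-Wang planar projection theorem.

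Under point-line duality each $\delta$-tube $T\in\T$ corresponds to a $\delta$-cube $T^*$ and each $\delta$-cube $Q\in\P$ corresponds to a $\delta$-tube $Q^*$, with the incidence relation preserved. The hypothesis becomes a $(\delta,t,\delta^{-\eta})$-set of tubes $\{Q^*\}_{Q\in\P}$, each meeting $\sim M$ cubes from a fixed collection $\mathcal{E}=\{T^* : T\in\T\}$ in a $(\delta,s,\delta^{-\eta})$ pattern along the tube, and the conclusion is equivalent to the lower bound $|\mathcal{E}|\gtrsim \delta^{-\min\{t,(s+t)/2,1\}+\epsilon}M$. This is the incidence content of the planar Furstenberg set theorem for $(s,t)$-configurations with slice size $M$ (rather than the saturated $M\sim \delta^{-s}$), and I would read off the desired bound from the Furstenberg estimate by tracking the dependence on $M$ in its proof.

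To establish the underlying Furstenberg estimate I would proceed by induction on the scale $\delta$. After fixing an intermediate dyadic scale $\Delta\in(\delta,1)$ and Frostman-regularizing $\P$ and $\T$ so that the coarsenings at scale $\Delta$ and the restrictions at scale $\delta/\Delta$ remain non-concentrated up to polylogarithmic losses, I would apply the inductive hypothesis to each piece. The coarse-scale branching is where the projection theorem enters: for a $(\Delta,t)$-set of directions and a $(\Delta,s)$-set in the plane, the Ren-Wang planar projection theorem gives covering number $\gtrsim \Delta^{-\min\{s,(s+t)/2,1\}}$ for most projections, and it is this quantitative gain that produces the threshold $(s+t)/2$. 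A pigeonhole over $\Delta$ closes the induction and absorbs the cumulative loss into $\delta^\epsilon$.

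The main obstacle is the middle regime where the bound equals $(s+t)/2$. The extremes $t$ (active when $s>t$, a multiplicity argument on a single cube) and $1$ (saturation when $s+t\geq 2$, from the $\delta^{-2}$ volume ceiling) are elementary. The middle threshold is the Wolff-type exponent whose proof requires the planar projection theorem in full strength, and the true difficulty is preventing the $\delta^{-\eta}$ Frostman losses from compounding across the many scales of the induction so that the final error remains within a single $\delta^\epsilon$ together with the $(\log\epsilon^{-1})^{-c}$ factor appearing in the statement.
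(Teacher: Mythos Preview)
The paper does not contain a proof of this statement: Theorem~\ref{thm:RenWang} is quoted verbatim as Theorem~4.1 of Ren--Wang \cite{RenWan23} and used as a black box in the proof of Theorem~\ref{thm:effthm1}. There is therefore no proof in the paper to compare your proposal against.

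For what it is worth, your sketch is a reasonable high-level outline of the actual Ren--Wang argument: the incidence bound is indeed the dual form of their Furstenberg estimate, and that estimate is proved by a multi-scale induction whose key input at each step is their sharp exceptional-set bound for planar orthogonal projections. The identification of the three regimes (the elementary $t$ and $1$ endpoints versus the genuinely hard $(s+t)/2$ middle) is also accurate. What your outline does not address, and what constitutes most of the technical work in \cite{RenWan23}, is the uniformization/regularization machinery needed to ensure that the coarse and fine pieces remain genuine $(\cdot,\cdot,C)$-sets with controlled constants at every intermediate scale, and the careful bookkeeping that keeps the accumulated losses inside a single $\delta^{\epsilon}$; without that, the induction does not close.
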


We will also use a result of Orponen, Shmerkin and Wang \cite{OrpShmWan24} on the existence of `thin tubes'. This was used in their paper establishing strong lower bounds on radial projections in $\R^2$. 

Recall that a probability measure $\mu$ is said to satisfy the \emph{$s$-dimensional Frostman condition}, if $\mu(B(x,r)) \leq Cr^s$ for some fixed constant $C > 0$, for all $x\in \R^d$ and all $r > 0$.

In \cite{OrpShmWan24} (Corollary 2.21) the authors proved the following statement:

\begin{theorem}\label{cor:OrpShmWan}
Let $s \in (0,1]$ and let $\mu, \nu$ be probability measures in the plane that satisfy the $s$-dimensional Frostman condition. 
Assume $\mu(\ell)\nu(\ell)<1$ for every line $\ell$. Then for every $0\leq \sigma < s$ there exists a Borel set $E\subset\R^2\times \R^2$ of positive $\mu\times\nu$ measure and a constant $C>0$ such that the following holds.

For every $x\in\R^2$, $r>0$ and for every rectangle $x\in R\subset\R^2$ of width $r$,
$$\nu(\{y\in R\mid (x,y)\in E\})\le Cr^\sigma.$$

\end{theorem}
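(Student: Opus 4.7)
The plan is to reduce the theorem to the existence, for a positive-$\mu$-measure set of $x$, of a Borel slice $E_x \subseteq \mathrm{supp}(\nu)$ such that $\pi_{x\sharp}(\nu|_{E_x})$ is $\sigma$-Frostman on $\mathcal{S}^1$ with a constant uniform in $x$. A rectangle $R \ni x$ of width $r$, split dyadically by the distance from $x$, contributes from the near part $R \cap B(x,r)$ at most $\nu(B(x,r)) \leq C r^s \leq Cr^\sigma$ by the Frostman condition on $\nu$, and from each dyadic annulus $A_k = \{|y-x| \sim 2^{-k}\}$ with $2^{-k} \geq r$ a set projecting into an arc of angular length $\lesssim r \cdot 2^k$; properly balancing these two estimates gives the desired $r^\sigma$ bound once the projected measure $\pi_{x\sharp}(\nu|_{E_x})$ is uniformly $\sigma$-Frostman.

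Next, to produce these slices, I would invoke a radial projection dimension bound: the hypotheses (two $s$-Frostman measures with $\mu(\ell)\nu(\ell) < 1$ for every line) are precisely what is needed for an OSW-type statement yielding $\dim_H \pi_{x\sharp}\nu \geq s$ for $\mu$-almost every $x$. For each such $x$, Frostman's lemma applied within the fiber produces a Borel set $F_x \subseteq \mathcal{S}^1$ of positive $\pi_{x\sharp}\nu$-measure on which $\pi_{x\sharp}\nu$ satisfies a $\sigma$-Frostman bound with some constant $C(x)$; I set $E_x := \pi_x^{-1}(F_x) \cap \mathrm{supp}(\nu)$, which has $\nu(E_x) > 0$.

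The critical step is upgrading the pointwise constants $C(x)$ to a single constant $C$. My approach is a two-step pigeonhole: by inner regularity (Lusin's theorem), extract a compact $X_0$ of positive $\mu$-measure on which simultaneously $C(x) \leq C_0$ and $\nu(E_x) \geq \eta > 0$. The candidate set $E := \{(x,y) : x \in X_0,\ y \in E_x\}$, rendered jointly Borel via a measurable selection argument, then satisfies $(\mu \times \nu)(E) \geq \eta \mu(X_0) > 0$, and the rectangle bound holds with constant proportional to $C_0$.

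The main obstacles I anticipate are twofold. First, one needs a sufficiently \emph{quantitative} radial projection theorem that produces not merely the a.e.\ dimension statement but positive-$\mu$-measure sets of $x$ on which the slicewise $\sigma$-Frostman constant is uniformly bounded --- the qualitative form of Theorem~\ref{thm:OSW1} does not suffice, and one must pass to the $\delta$-discretized framework developed in \cite{OrpShmWan24}. Second, arranging the joint Borel measurability of the map $x \mapsto F_x$ (and hence of $E$) requires a careful selection argument. The hypothesis $\mu(\ell)\nu(\ell) < 1$ enters precisely to rule out atomic concentrations of $\pi_{x\sharp}\nu$ in a single direction, which would otherwise obstruct both the dimension lower bound and the Frostman extraction on the projected side.
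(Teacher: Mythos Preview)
This statement is not proved in the paper; it is quoted as Corollary~2.21 of \cite{OrpShmWan24} and used as a black box. So there is no ``paper's own proof'' to compare against, and the relevant question is whether your sketch stands on its own.

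It does not, and you have in fact identified the reason yourself. Your Step~2 requires an input of the form ``for $\mu$-a.e.\ $x$, the pushforward $\pi_{x\sharp}\nu$ has lower Hausdorff dimension at least $s$'' (in the sense of local dimensions), so that in Step~3 you can pigeonhole onto a positive-measure set where the restriction is $\sigma$-Frostman. But this a.e.\ measure-dimension statement is \emph{not} Theorem~\ref{thm:OSW1}, which concerns only the supremum over $x$ of the dimension of the projected \emph{set}. In the Orponen--Shmerkin--Wang paper the logical flow is the reverse of what you propose: the discretised incidence estimates yield the thin-tubes statement (your target), and the a.e.\ measure/dimension bounds for radial projections are then \emph{deduced} from thin tubes. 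So invoking ``the $\delta$-discretised framework of \cite{OrpShmWan24}'' here is not a reduction at all --- it is the proof itself, and your outline collapses to ``cite \cite{OrpShmWan24}''.

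Two smaller points. In Step~3, what you call ``Frostman's lemma'' is not Frostman's lemma (which produces a \emph{new} measure on a set of large dimension); what you actually need is the elementary fact that if $\rho$-a.e.\ point has lower local dimension at least $s>\sigma$, then $\rho$ restricted to a suitable positive-measure set is $\sigma$-Frostman. In Step~1, the dyadic-annulus balancing as written does not sum to $r^\sigma$: the annulus at distance $\sim r$ contributes an arc of angular size $\sim 1$, and the geometric sum is $O(1)$ rather than $O(r^\sigma)$. The clean fix is to first discard $E_x\cap B(x,\delta_0)$ for a fixed $\delta_0>0$ (which costs only $C\delta_0^{s}$ in $\nu$-measure); then every surviving point is at distance $\geq\delta_0$ from $x$, the rectangle projects to a single arc of size $\lesssim r/\delta_0$, and the $\sigma$-Frostman bound on the projection gives $Cr^\sigma$ directly with constant depending on $\delta_0$.
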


\subsection{Algorithmic methods}
Our proof will use the algorithmic methods which have recently been applied to geometric measure theory \cite{AltBusWil25,BusFie25,Lutz21, LuQiYu24,LuLuMa23a}. We will assume basic knowledge about the fundamental definitions and theorems of Kolmogorov complexity, effective dimension and the use in geometric measure theory. For a more detailed introduction for mathematicians working in geometric measure theory who are unfamiliar with the algorithmic methods, the authors are in the process of writing lecture notes on this topic. These will not assume that the reader has any apriori background knowledge in computability theory. 

For any $x\in\R^d$ and $n\in\N$, the Kolmogorov complexity of $x$ at precision $n$ is
\begin{center}
    $K_n(x) = K(Q)$,
\end{center}
where $K(Q)$ is the Kolmogorov complexity of the dyadic cube with side length $2^{-n}$ containing $x$. We note that in the literature, $K_n(x)$ is more commonly defined using rational balls, however the definitions are equivalent up to an additive $O(\log n)$ factor, which can be safely ignored. 

We frequently use the following notation. For any $x\in\R^d$ and $n\in\N$, we will write
\begin{center}
    $K_n(x) \lessapprox L$
\end{center}
to mean that $K_n(x) \leq L + O(\log n)$. Similarly, we will write \begin{center}
    $K_n(x) \gtrapprox L$
\end{center}
to mean that $K_n(x) \geq L + O(\log n)$. We write $K_n(x) \approx L$ when $K_n(x) \lessapprox L$ and $K_n(x) \gtrapprox L$. More generally, when we write formulas involving several Kolmogorov complexities, possibly at different precisions (e.g. $K_{n_1}(x)\lessapprox K_{n_2}(y)-K_{n_3}(z)$), by $\lessapprox$ we mean that the inequality holds up to an error term $\sum_iO(\log n_i)$, where we are summing for all the precisions $n_i$ that appear (on either side of $\lessapprox$) in all the Kolmogorov complexities in our formula. 

One of the most important results in Kolmogorov complexity is the \textit{symmetry of information}. In \cite{LutStu20}, Lutz and Stull proved that a version of this holds in Euclidean space, namely
\begin{equation*}
    K^A_{n,m}(x, y) \approx K^A_{n, m}(x\vert y) + K^A_m(y)
\end{equation*}
for every oracle $A$, $x\in\R^{d_1}, y\in\R^{d_2}$ and precisions $n,m \in \N$.

In this paper, the main application of Kolmogorov complexity is to define a notion of local dimension of points in Euclidean space, first defined by Lutz \cite{Lutz03a}. We use the Kolmogorov characterization proved by Mayordomo \cite{Mayordomo02}. In particular, the effective Hausdorff dimension of $x\in\mathbb{R}^n$ relative to an oracle $A$ is
\begin{equation*}
    \dim_H^A(x) = \liminf \frac{K^A_n(x)}{n}.
\end{equation*}

We are able to use effective methods in geometric measure theory because of the following characterization of the Hausdorff dimension, proved by Lutz and Lutz \cite{LutLut18}.
\begin{theorem}\label{thm:pointtoset}
    For every $E\subseteq\mathbb{R}^n$,
    \begin{equation}\label{eq:HPtS}
        \dim_H(E) = \min_{A}\sup_{x\in E}\dim_H^A(x).
    \end{equation}

\end{theorem}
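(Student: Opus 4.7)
The plan is to prove the two inequalities $\le$ and $\ge$ separately, both using Mayordomo's characterization $\dim_H^A(x)=\liminf_n K^A_n(x)/n$ and the definition of $\mathcal{H}^s$ via $\delta$-covers. The direction $\le$ holds for every oracle $A$; the direction $\ge$ requires constructing an explicit oracle $A$ that realizes the minimum.

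For the direction $\le$, I would fix any oracle $A$ and any $t>\sup_{x\in E}\dim_H^A(x)$ and $s>t$. For each $x\in E$, the definition of $\liminf$ gives $K^A_n(x)<tn$ for infinitely many $n$, so $E\subseteq \bigcup_{n\ge N}U_n$ for every $N$, where $U_n\defeq\{y:K^A_n(y)<tn\}$. Each $U_n$ is a union of at most $2^{tn}$ dyadic cubes of side $2^{-n}$ (only that many descriptions of length below $tn$ exist). Summing, $\mathcal{H}^s_{C\cdot 2^{-N}}(E)\le C\sum_{n\ge N}2^{(t-s)n}\to 0$, so $\mathcal{H}^s(E)=0$ and $\dim_H(E)\le s$; letting $s\searrow t\searrow\sup$ completes this direction.

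For the direction $\ge$, I would construct an oracle $A$ witnessing $\sup_{x\in E}\dim_H^A(x)\le d$ where $d\defeq\dim_H(E)$. For each rational $s>d$ and each $k\in\N$, $\mathcal{H}^s(E)=0$ yields a dyadic cover $\mathcal{C}_{s,k}=\{Q_i^{s,k}\}$ of $E$ with $\operatorname{diam}Q_i^{s,k}\le 2^{-k}$ and $\sum_i(\operatorname{diam}Q_i^{s,k})^s\le 2^{-k}$, which I enumerate in decreasing order of diameter. Let $A$ encode all such covers. For $x\in E$, let $Q_i^{s,k}\ni x$ have dyadic scale $n\ge k$; the sorted enumeration forces $i\cdot 2^{-ns}\le 2^{-k}$, so the triple $(s,k,i)$ is describable in $sn-k+O(\log n)$ bits and determines $x$ to precision $n$. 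Thus $K^A_n(x)\le sn-k+O(\log n)$ along the subsequence $n=n_{s,k}(x)\ge k$; letting $k\to\infty$ with $s$ fixed drives $\liminf_n K^A_n(x)/n\le s$, and letting $s\searrow d$ through rationals yields $\dim_H^A(x)\le d$ for every $x\in E$.

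The main obstacle is verifying in the $\ge$ direction that the scales $n_{s,k}(x)$ realized by cubes of the cover containing $x$ actually control the $\liminf$ rather than merely bounding $K^A_n(x)/n$ along a sparse subsequence the $\liminf$ could ignore. Because $n_{s,k}(x)\ge k$, as $k\to\infty$ these scales tend to infinity, and the bound $K^A_n(x)/n\le s-k/n+o(1)\le s$ is exhibited at arbitrarily large $n$, so for each fixed rational $s>d$ the $\liminf$ is at most $s$; taking $s\searrow d$ over rationals then concludes. The remaining work is routine bookkeeping: packaging countably many covers indexed by $(s,k)$ into a single oracle while keeping all index descriptions within the allowed $O(\log n)$ overhead.
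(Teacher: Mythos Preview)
The paper does not supply its own proof of this theorem; it is quoted as a result of J.~Lutz and N.~Lutz \cite{LutLut18} and used as a black box. So there is nothing in the paper to compare against.

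Your argument is essentially the standard proof of the point-to-set principle and is correct in outline. A couple of minor points worth tightening: in the $\ge$ direction, the description length of the triple $(s,k,i)$ is really $\log i + O(\log k) + O_s(1)$, so the bound you obtain is $K^A_n(x)\le sn - k + O(\log k) + O_s(1)$; since $n\ge k$ the extra $O(\log k)$ term is harmless after dividing by $n$ and sending $k\to\infty$, but it should be tracked explicitly. Also, the worry you flag as the ``main obstacle'' is not actually an obstacle: for the $\liminf$ it suffices to exhibit \emph{some} sequence of scales tending to infinity along which $K^A_n(x)/n\le s+o(1)$, and $n_{s,k}(x)\ge k\to\infty$ does exactly that. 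The construction of $A$ as a join of countably many covers indexed by rational $s>d$ and $k\in\N$ is routine, and the fact that the infimum over $A$ is attained (so that ``$\min$'' is justified) follows because your single oracle $A$ already achieves $\sup_{x\in E}\dim_H^A(x)\le d$ while the $\le$ direction shows $d$ is a lower bound for every oracle.
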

 We call an oracle $A$ a Hausdorff oracle for $E$ if it achieves the minimum in \eqref{eq:HPtS}.
 
 A set $K \subseteq \R^n$ is \textit{effectively compact relative to} $A$ if $K$ is compact and the set of finite open covers of $K$ by dyadic cubes is computably enumerable relative to $A$. We will use the fact that every compact set is effectively compact relative to some oracle. Hitchcock \cite{Hitchcock05} and J. Lutz \cite{Lutz03a} proved that, if $K$ is effectively compact relative to an oracle $A$, then $A$ is a Hausdorff oracle for $K$. For the purposes of this paper, we conclude that, if $Y\subseteq\R^2$ is computably compact relative to $A$ and $x\in\R^2$, then 
 \begin{equation}\label{don}
     \dim_H(\pi_x Y) = \sup\limits_{y\in Y} \dim^{A,x}_H(\pi_x y).
 \end{equation}
If $\mu$ is a Borel probability measure on $\R^d$, we say that $\mu$ is computable relative to an oracle $A$ if, for every dyadic cube $Q\in\D$, $\mu(D)$ is computable relative to $A$. We note that every Borel probability measure is computable relative to some oracle. If $K\subseteq\R^d$, $\mu$ is a probability measure whose support is contained in $K$ and $\mu$ is computable relative to an oracle $A$, then it was shown in \cite{Stull22c} that, for every oracle $B$, 
\begin{equation}\label{dd}
    K^{A,B}_n(x) \gtrapprox K^A_n(x)
    \end{equation}
for $\mu$-almost every $x\in K$.

We will frequently use the well-known fact relating Kolmogorov complexity and enumerable sets. Suppose that $\P \subseteq \D_{2^{-n}}$ is enumerable, given information encoded into a binary string $w$. If $x \in \R^2$ and $Q(x) \in \P$, where $Q(x)$ is the dyadic cube containing $x$, then
\begin{equation}
    K_n(x\mid w) \lessapprox \log \vert \P \vert.
\end{equation}
An immediate consequence is that 
\begin{equation}
    K_n(x) \lessapprox \log \vert \P \vert + \vert w \vert,
\end{equation}
where $\vert w \vert$ denotes the length of $w$.

\section{Complexity of lines}
We will need a slight modification of the definition of $(\delta, s, C)$-sets which is better suited for computability.

\begin{definition}
A set $A \subseteq\D_{2^{-n}}$ is called a \emph{dyadic $(2^{-n}, s, C)$-set} if for each $m\leq n$ and each $Q\in D_{2^{-m}}$, $Q$ has at most $C2^{-sm}\vert A \vert$ many subcubes in $A$.

\end{definition}

\begin{remark}\label{remark0}
Note that, if $A\subseteq\D_{2^{-n}}$ is a dyadic $(2^{-n}, s, C)$-set, then $A$ is a $(2^{-n}, s, C^\prime)$-set, where $C^\prime$ is a constant multiple of $C$. 
\end{remark}

\begin{remark}\label{remark}We also note that, if $A \subseteq\D_{2^{-n}}$, and $s$ and $C$ are rational, then determining if $A$ is a dyadic $(2^{-n}, s, C)$-set is computable, given $n, s$ and $C$. 
\end{remark}

\begin{proposition}\label{prop:enumerableSset}
    Let $x\in\R^2$ such that $\dim_H(x) > t > 0$, and let $\epsilon > 0$. Then for every sufficiently large $n$ the following holds:

    If $\P\subseteq \D_{2^{-n}}$ is enumerable given $w$, and $\P$ contains the cube containing $x$, then it has a dyadic $(2^{-n}, t, C)$ subset $\P'$ such that $C= 2^{O(\eps n)+|w|}$ and
    \begin{center}
        $\vert \P^\prime\vert \geq 2^{K_n(x)-O( \eps^{-1}\log n)-|w|}$.
    \end{center}

    In particular, if $|w|=O(\log n)$, then we get $\vert \P^\prime\vert \geq 2^{K_n(x)-O( \eps^{-1}\log n)}$ and $C= 2^{O(\eps n)}$.
\end{proposition}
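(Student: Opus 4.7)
The plan is to construct $\P'$ by a top-down ``branching cap'' procedure across $K := \ceil{1/\eps}$ discrete scales $m_k := k\floor{\eps n}$ for $k = 0, 1, \ldots, K$. First I will observe that a routine trickle-down inequality reduces the verification of the Frostman condition to just these $K+1$ scales, at a multiplicative cost of $2^{t\eps n}$ in $C$, which is absorbed into the allowed $2^{O(\eps n)}$ factor. For a cube $\hat Q \in \D_{2^{-m_{k-1}}}$, write $\mathrm{ch}_k(\hat Q)$ for the number of cubes of the scale-$m_k$ projection of $\P$ contained in $\hat Q$, and let $\hat Q_k(x)$ denote the ancestor of $Q(x)$ at scale $m_k$. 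Setting $B_k$ to be the smallest power of $2$ at least $\mathrm{ch}_k(\hat Q_{k-1}(x))$, I build $\P'$ top-down from $\P'_0 := \{[0,1]^2\}$ by retaining, for each $\hat Q \in \P'_{k-1}$, the first $\min\{B_k, \mathrm{ch}_k(\hat Q)\}$ children of $\hat Q$ in lexicographic order. Since the ancestor $\hat Q_{k-1}(x)$ has exactly $\mathrm{ch}_k(\hat Q_{k-1}(x)) \leq B_k$ children, every one of them is retained, so $\hat Q_k(x) \in \P'_k$ at every stage, and inductively $Q(x) \in \P' := \P'_K$.

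Next I will bound $|\P'|$ from below. By construction, $\P'$ is enumerable from $\P$ together with the $K$ thresholds $B_k$, each a power of $2$ in $[1, 2^n]$ describable in $O(\log n)$ bits, for a total description length of $|w| + O(\eps^{-1}\log n)$. Since $Q(x) \in \P'$, the standard enumeration bound yields $K_n(x) \lessapprox \log|\P'| + |w| + O(\eps^{-1}\log n)$, which rearranges to the stated lower bound $|\P'| \geq 2^{K_n(x) - |w| - O(\eps^{-1}\log n)}$.

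For the Frostman verification at each discrete scale $m_{k_0}$: the branching cap forces $n'(\hat Q) \leq \prod_{k > k_0} B_k$ for every $\hat Q \in \D_{2^{-m_{k_0}}}$, while full survival of the $x$-branch yields $|\P'| \geq \prod_k \mathrm{ch}_k(\hat Q_{k-1}(x))$. Combining these with $B_k \leq 2\,\mathrm{ch}_k(\hat Q_{k-1}(x))$ and the symmetry-of-information estimate $\log \prod_{k \leq k_0} \mathrm{ch}_k(\hat Q_{k-1}(x)) \gtrapprox K_{m_{k_0}}(x) - |w|$, obtained by encoding $\hat Q_{k_0}(x)$ along its path of $\P$-ancestors, I will deduce
\[
\frac{n'(\hat Q)}{|\P'|} \;\lessapprox\; 2^{|w| + O(\eps^{-1}\log n) - K_{m_{k_0}}(x)}.
\]
The hard part will be concluding $K_{m_{k_0}}(x) \gtrapprox t m_{k_0}$ at each intermediate scale, which is delicate because $\dim_H(x) > t$ is a $\liminf$ condition and need not hold at every scale $m \leq n$. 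I expect to resolve this by choosing $n$ large enough that $\eps n$ exceeds the threshold implicit in $\dim_H(x) > t$, so that every $m_{k_0} \geq \eps n$ satisfies $K_{m_{k_0}}(x) > t m_{k_0}$; meanwhile, for $m_{k_0} \leq \eps n$ the Frostman condition with $C = 2^{O(\eps n) + |w|}$ is vacuous since $C 2^{-t m_{k_0}} \geq 1$. Combined, this gives the claimed Frostman constant $C = 2^{O(\eps n) + |w|}$.
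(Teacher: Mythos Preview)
Your approach is genuinely different from the paper's: whereas the paper defines $\P'$ directly by complexity thresholds (keeping those $Q\in\P$ with $K(Q)\lessapprox K_n(x)$ and $K_{n,m}(Q\mid Q)\lessapprox K_{n,m}(x\mid x)$ at the discrete scales $m$), you attempt a purely combinatorial branching-cap construction. The combinatorial route is appealing, but your Frostman verification has a real gap.

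The claim ``full survival of the $x$-branch yields $|\P'| \geq \prod_k \mathrm{ch}_k(\hat Q_{k-1}(x))$'' is false. Full survival only says that at each level $k$ the particular cube $\hat Q_{k-1}(x)$ retains all of its $\mathrm{ch}_k(\hat Q_{k-1}(x))$ children; it says nothing about how many children the \emph{other} cubes in $\P'_{k-1}$ have. If those cubes are thin, $|\P'|$ can be far smaller than the product. Already with two levels, $\mathrm{ch}_1 = 2$ and $\mathrm{ch}_2(\hat Q_1(x)) = M$ but the sibling of $\hat Q_1(x)$ having a single child, one gets $|\P'| = M+1 < 2M = \prod_k \mathrm{ch}_k(\hat Q_{k-1}(x))$. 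Without this lower bound your chain for the Frostman ratio breaks: you control $n'(\hat Q) \le \prod_{k>k_0} B_k$, but the only honest lower bound on $|\P'|$ is the enumeration bound $|\P'|\ge 2^{K_n(x)-|w|-O(\eps^{-1}\log n)}$, and there is no upper bound relating $\prod_{k>k_0} B_k$ to $K_n(x)-K_{m_{k_0}}(x)$ (path encoding gives only the reverse inequality $K_{n,m_{k_0}}(x\mid x)\lessapprox \log\prod_{k>k_0}\mathrm{ch}_k+|w|+O(\eps^{-1}\log n)$). So the set you build need not be a dyadic $(2^{-n},t,2^{O(\eps n)+|w|})$-set.

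There is also a smaller issue: selecting the ``first $B_k$ children in lexicographic order'' from a set that is only c.e.\ is not a c.e.\ operation, since lexicographically earlier children may appear later in the enumeration; this is easily patched by using enumeration order instead. The Frostman gap is not so easily patched. The paper's approach sidesteps both problems: imposing $K_{n,m}(Q\mid Q)\lessapprox K_{n,m}(x\mid x)$ directly caps the number of $\P'$-subcubes in any $Q\in\D_{2^{-m}}$ by $2^{K_{n,m}(x\mid x)+O(\log n)}$, and symmetry of information turns this into the Frostman bound without ever comparing $|\P'|$ to a branching product.
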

\begin{proof} 
We define $\P'$ to be the set of those cubes $Q$ in $\P$ for which $$K(Q)\lessapprox K_n(x),\,\text{ and }\,K_{n,m}(Q\mid Q)\lessapprox K_{n,m}(x\mid x)$$ for every $m\le n$ which is an integer multiple of $\lfloor \eps n\rfloor$.

    Note that $\P^\prime$ is enumerable given $w$ and given each $K_{n,m}(x\mid x)$ and $K_{n}(x)$. Moreover, $Q(x) \in \P^\prime$, where $Q(x)$ is the $2^{-n}$-dyadic cube containing $x$. Therefore, 
    \begin{align*}
        K_n(x) &\lessapprox \log \vert \P^\prime \vert + \log K_n(x) + \sum\limits_{m} \log K_{n,m}(x\mid x)+|w|\\
        &\leq \log\vert \P^\prime \vert  + O(\eps^{-1}\log n)+|w|.
    \end{align*}
 
    Hence, \begin{center}
        $\vert \P^\prime\vert \geq 2^{K_n(x)-O(\eps^{-1}\log n)-|w|}$
    \end{center}
    as required. Moreover, since there are at most $2^{K_n(x) + O(\log n)}$ many cubes $Q$ such that $K(Q)\lessapprox K_n(x)$, we see that $$|\P'|\le 2^{K_n(x) + O(\log n)}.$$

    Let $m \leq n$ be an integer multiple of $\lfloor \eps n\rfloor$. Now let $Q\in \D_{2^{-m}}$. By our definition of $\P^\prime$, if $Q'\in\P'$ is a subcube of $Q$, then $K_{n,m}(Q'\mid Q)=K_{n,m}(Q'\mid Q')\lessapprox K_{n,m}(x\mid x)$. For any fixed $Q\in \D_{2^{-m}}$, the number of subcubes $Q^\prime \in \D_{2^{-n}}$ such that $K_{n,m}(Q^\prime \mid Q) \lessapprox K_{n,m}(x\mid x)$ is $\lesssim 2^{K_{n,m}(x\mid x) + O(\log n)}$. Therefore, using symmetry of information, the number of subcubes is at most
     $$2^{K_{n, m}(x\mid x)+O(\log n)}\leq 2^{K_n(x) - K_{m}(x)+O(\log n)}\le \vert \P^\prime\vert 2^{-mt+O(\eps^{-1}\log n)+|w|}.$$

Since this holds for every $m$ which is an integer multiple of $\lfloor \eps n\rfloor$, and since between $m$ and $m+\lfloor \eps n\rfloor$ the number of cubes can be multiplied by at most a factor $2^{O(\eps n)}$, the statement indeed follows with $C=2^{O(\eps n)+O(\eps^{-1}\log n)+|w|}=2^{O(\eps n)+|w|}$.
    
\end{proof}

\begin{theorem}\label{thm:effthm1}
    Let $x$ be a point and $\ell$ be a line in $\R^2$ such that $x\in\ell$, and let $\dim_H(x)>t>0$ and $\dim_H^x(\ell)>s>0$. Let $\epsilon>0$ and let $n\in\N$ be sufficiently large. Then,
    \begin{equation}\label{9}
        K_n(\ell) \ge K_n(\ell\mid x) + \min\{t, (s+t)/2, 1\}n - O(\epsilon n) - O_{\eps,s,t}(\log n),
    \end{equation}
    where the indices $\eps,s,t$ mean that the constant in the $O(\log n)$ term depends on $\eps,s,t$.
\end{theorem}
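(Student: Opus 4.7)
The plan is to combine Proposition \ref{prop:enumerableSset} with the Ren--Wang incidence bound (Theorem \ref{thm:RenWang}), via a dyadic pigeonholing on annuli around $x$. Set $\delta := 2^{-n}$ and $u := \min\{t,(s+t)/2,1\}$.

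First I would apply Proposition \ref{prop:enumerableSset} to $x$ with dimension parameter $t$ and starting set $\D_\delta$ (which is enumerable with $O(\log n)$ bits), producing a dyadic $(\delta,t,2^{O(\eps n)})$-set $\mathcal{P}\subseteq \D_\delta$ with $Q(x)\in\mathcal{P}$ and $|\mathcal{P}|\geq 2^{K_n(x) - O(\eps^{-1}\log n)}$. Next, using point--line duality (identifying the line $\ell$ with its dual point $\ell^*\in x^*$), I would apply Proposition \ref{prop:enumerableSset} relative to the oracle $x$ with dimension parameter $s$, starting from the family of $\delta$-tubes through $x$ (in the dual, the cubes inside the strip around $x^*$, enumerable from $x$ with $O(\log n)$ bits). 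This yields a dyadic $(\delta,s,2^{O(\eps n)})$-set $\mathcal{T}_0$ of tubes through $x$, containing the $\delta$-tube of $\ell$, with $|\mathcal{T}_0|\geq 2^{K_n(\ell\mid x) - O(\eps^{-1}\log n)}$, enumerable given $x$ and $O(\log n)$ bits.

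For each $Q\in\mathcal{P}$ I put $T(Q):=\{T\in\mathcal{T}_0: T\cap Q\neq\emptyset\}$, i.e., tubes of $\mathcal{T}_0$ that pass through both $x$ and $Q$; in the dual, $T(Q)^*$ is the intersection of $\mathcal{T}_0^*\subset x^*$ with the dual strip $Q^*$, a $(\delta,s,2^{O(\eps n)})$-set of length $\sim \delta/|x-Q|$ along $x^*$. A dyadic pigeonhole over the $O(n)$ choices of $|x-Q|$ localizes $\mathcal{P}$ to a single annulus, and a second application of Proposition \ref{prop:enumerableSset} re-extracts a $(\delta,t,2^{O(\eps n)})$-Frostman sub-family $\mathcal{P}'\subseteq\mathcal{P}$ on which $|T(Q)|\sim M$ for a uniform $M$. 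With this, Theorem \ref{thm:RenWang} applies and yields
\[
\Bigl|\bigcup_{Q\in\mathcal{P}'} T(Q)\Bigr|\gtrsim \delta^{-u + O(\eps)}\cdot M.
\]

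Combining $\bigcup_Q T(Q)\subseteq \mathcal{T}_0$ and $|\mathcal{T}_0|\lessapprox 2^{K_n(\ell\mid x)}$ on one side with the Frostman-derived estimate for $M$ coming from the chosen annulus on the other side, I would rearrange to obtain the claimed inequality and translate it, via symmetry of information and the enumerability of $\mathcal{T}_0$ from $x$, into the lower bound $K_n(\ell)\geq K_n(\ell\mid x) + un - O(\eps n) - O_{\eps,s,t}(\log n)$. The main obstacle is the pigeonholing step: one must choose the annular scale so that the Frostman constants of $\mathcal{P}'$ and of each $T(Q)$ stay simultaneously at $2^{O(\eps n)}$ (i.e., below the $\delta^{-\eta}$ tolerance of Theorem \ref{thm:RenWang}), and one must track the logarithmic losses from each pigeonhole and re-application of Proposition \ref{prop:enumerableSset} so that the aggregate error is only $O(\eps n) + O_{\eps,s,t}(\log n)$.
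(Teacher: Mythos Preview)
Your scheme has a structural gap: the quantity $K_n(\ell)$ never enters the argument. Your tube family $\mathcal{T}_0$ is built from $x$ alone and has cardinality $\approx 2^{K_n(\ell\mid x)}$, and everything downstream---the families $T(Q)\subseteq\mathcal{T}_0$, the Ren--Wang output $\lvert\bigcup_Q T(Q)\rvert\gtrsim\delta^{-u}M$, and the upper bound $\lvert\bigcup_Q T(Q)\rvert\leq\lvert\mathcal{T}_0\rvert$---only involves $K_n(\ell\mid x)$. No amount of symmetry of information or rearrangement will produce $K_n(\ell)$ from these ingredients. Separately, because every tube of $\mathcal{T}_0$ passes through $x$, for a cube $Q$ at distance $r$ from $x$ the set $T(Q)$ lives in a $\sim\delta/r$-arc of directions; when $r\sim 1$ this forces $T(Q)$ into essentially a single $\delta$-ball, so $T(Q)$ is \emph{not} a $(\delta,s,\delta^{-\eta})$-set with small $\eta$, and the hypothesis of Theorem~\ref{thm:RenWang} fails. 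Your proposed fix---pigeonhole on the annulus and then re-apply Proposition~\ref{prop:enumerableSset}---does not work either, since Proposition~\ref{prop:enumerableSset} requires the cube $Q(x)$ to lie in the set, which pins you to the innermost annulus where every $T(Q)$ equals $\mathcal{T}_0$ and Ren--Wang becomes vacuous.

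The paper's proof resolves both issues by choosing the tube family differently: it sets $\mathcal{T}=\{T:K(T)\lessapprox K_n(\ell)\}$, so that $\lvert\mathcal{T}\rvert\approx 2^{K_n(\ell)}$ is exactly the quantity to be bounded below. The cube family $\mathcal{P}$ is then defined as those low-complexity cubes $Q$ that carry a dyadic $(\delta,s,2^{O(\eta_0 n)})$-subset of $\mathcal{T}$ of size $\gtrsim 2^{K_n(\ell\mid x)}$; this condition is enumerable from $O(\log n)$ bits, and $Q(x)\in\mathcal{P}$ by applying Proposition~\ref{prop:enumerableSset} to the tubes of $\mathcal{T}$ through $Q(x)$ (using $\dim_H^x(\ell)>s$). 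Now Proposition~\ref{prop:enumerableSset} applied to $\mathcal{P}$ gives the $(\delta,t)$-set $\mathcal{P}'$, and Ren--Wang yields $2^{K_n(\ell)}\approx\lvert\mathcal{T}\rvert\gtrsim\delta^{-u}\cdot 2^{K_n(\ell\mid x)}$, which is the claim. The point you are missing is that the tube family must be indexed by the \emph{absolute} complexity $K_n(\ell)$, and the richness of the per-cube families $\mathcal{T}(Q)$ must be \emph{built into} the definition of $\mathcal{P}$ rather than hoped for after the fact.
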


\begin{proof}Without loss of generality, we can assume that $t$, $s$ and $\eps$ are rational.
 
Define the set
\begin{center}
    $\T = \{T\in\T^{2^{-n}}\mid K(T) \lessapprox K_n(\ell)\},$
    \end{center}
    and denote
$$n_Q = \#\{T\in \T\mid T\cap Q \neq \emptyset\}.$$ 

We note that $\T$ is enumerable given $K_n(\ell)$, and that the $2^{-n}$-tube containing $\ell$ is in $\T$. Therefore, we have
\begin{equation}\label{10}
    \vert \T\vert = 2^{K_n(\ell) + O(\log n)}.
\end{equation}

Also note that that $\T$ is enumerable given $K_n(\ell)$, and therefore, for every $Q\in \D_{2^{-n}}$ and for every $T\in\T$ with $Q\cap T\neq \emp$ we have $K_n(T\mid Q)\lessapprox\log n_Q+\log K_n(\ell)$. In particular, $$K_n(\ell\mid x)\lessapprox \log n_{Q(x)}+\log K_n(\ell),$$ where $Q(x)\in\D_{2^{-n}}$ is the dyadic cube containing $x$.

Define the set 
$$\P_1 = \{Q\in \D_{2^{-n}} \mid K(Q) \lessapprox K_n(x), n_Q \geq 2^{K_n(\ell\mid x) - O(\log n)}\}.$$
Then $Q(x)\in\P_1$, and $\P_1$ is enumerable, given $n$, $K_n(\ell)$, $K_n(x)$ and  $K_n(\ell\mid x)$ (all of which take at most $O(\log n)$ bits). 

We fix a rational $\eta_0$ which will be specified later. Define the set $\P$ to be the set of those $Q\in \P_1$ that satisfy the following.  There is a dyadic $(2^{-n}, s, 2^{O(\eta_0 n)})$-set $\T(Q)\subseteq \T$ with $\vert \T(Q)\vert \geq 2^{K_n(\ell\mid x) - O(\eta_0^{-1}\log n)}$ such that $T \cap Q \neq \emptyset$ for all $T\in\mathcal{T}(Q)$. The constants in the $O(\cdot)$ terms will come from Proposition \ref{prop:enumerableSset}.

The set $\T(Q(x)) = \{T\in \T\mid T\cap Q(x) \neq \emptyset\}$ is enumerable given $Q(x)$ and $K_n(\ell)$, and contains the $2^{-n}$-tube containing $\ell$. Therefore, by the dual of Proposition \ref{prop:enumerableSset}, applied to $\T(Q(x))$, we see that $\T(Q(x))$ contains a subset $\T^\prime$ which is a dyadic $(2^{-n}, s, 2^{O(\eta_0 n)})$-set and such that $\vert \T^\prime \vert \geq 2^{K_n(\ell\mid x) - O(\eta_0^{-1} \log n)}$. Therefore we see that $Q(x) \in \P$. Furthermore, $\P$ is computable enumerable, given $s$, $\eta_0$, $K_n(x)$, $K_n(\ell)$ and $K_n(\ell\mid x)$ (recall Remark \ref{remark}, and that we assumed that our parameters are rational). Since of course $K(\eta_0,s)=O(1)$, and the other inputs have length $O(\log n)$, therefore by applying Proposition \ref{prop:enumerableSset} to $\P$, we obtain a set $\P^\prime \subseteq\P$ such that $\vert \P^\prime\vert \geq 2^{K_n(x) - O(\eta_0^{-1} \log n)}$ and $\P^\prime$ is a dyadic $(2^{-n}, t, 2^{O(\eta_0 n)})$-set. 

Now recall Remark \ref{remark0}. To summarize, $\P^\prime$ is a $(2^{-n}, t,C^\prime)$-set, for some fixed constant multiple of $2^{O(\eta_0 n)}$. And for every $Q\in \P^\prime$, there is a $(2^{-n}, s, C^\prime)$-set of tubes $\T(Q)$ such that $\vert \T(Q) \vert = 2^{K_n(\ell\mid x) - O(\eta_0^{-1}\log n)}$. 

Now we choose $\eta_0$ small enough so that we may apply Theorem \ref{thm:RenWang} to $\P^\prime$ with parameters $s$, $t$ and $\eps$ and $\eta:=(\log C')/n\sim\eta_0$. Applying Theorem \ref{thm:RenWang} yields 
\begin{center}
    $\vert \T \vert \gtrsim(\log\eps^{-1})^{-c}2^{\min\{t, \frac{s+t}{2}, 1\}n - \eps n}2^{K_n(\ell\mid x) - O(\eta^{-1}\log n)}$.
\end{center}
Therefore, using also \eqref{10}, we showed that for every $\eps,s,t$ there is an $\eta$ such that
$$K_n(\ell) \gtrapprox \log \vert \T\vert\ge K_n(\ell\mid x) + \min\{t, \frac{s +t}{2}, 1\}n - O(\eps_0 n) - O(\eta^{-1}\log n).$$
\end{proof}

We note that analogs of Proposition \ref{prop:enumerableSset} and Theorem \ref{thm:effthm1} using conditional complexities, instead of oracles, are true as well. 
\begin{proposition}\label{prop:enumerableSset2}
    Let $x\in\R^2$ and $\eps > 0$. Suppose that $n$ is sufficiently large and let $t = \min\{\frac{K_{m}(x)}{m} \mid \epsilon n \leq m \leq n\}$. Then the following holds.

    If $\P\subseteq \D_{2^{-n}}$ is enumerable given $w$, and $\P$ contains the cube containing $x$, then it has a dyadic $(2^{-n}, t, C)$ subset $\P'$ such that $C= 2^{O(\eps n)+|w|}$ and
    \begin{center}
        $\vert \P^\prime\vert \geq 2^{K_n(x)-O( \eps^{-1}\log n) -|w|}$.
    \end{center}
    In particular, if $|w| = O(\log n)$, then $\vert \P^\prime\vert \ge 2^{K_n(x) - O(\eps^{-1} \log n)}$ and $C = 2^{O(\eps n)}$.
\end{proposition}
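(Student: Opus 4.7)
The plan is to run the proof of Proposition \ref{prop:enumerableSset} essentially verbatim, replacing the use of $\dim_H(x)>t$ (which only gives $K_m(x)\gtrapprox tm$ for sufficiently large $m$) by the new hypothesis, which directly guarantees $K_m(x)\ge tm$ on the precise range of scales that actually appears in the construction. Since the definition of $\P'$ in the original proof only involves precisions $m$ that are integer multiples of $\lfloor\eps n\rfloor$, the scales of interest lie in $[\lfloor\eps n\rfloor,n]\subseteq[\eps n,n]$, which is exactly the window over which $t$ is defined.

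Concretely, I would set
\[
\P' \;=\; \bigl\{\,Q\in\P \;\bigm|\; K(Q)\lessapprox K_n(x)\ \text{and}\ K_{n,m}(Q\mid Q)\lessapprox K_{n,m}(x\mid x)\ \text{for every integer multiple}\ m\ \text{of}\ \lfloor\eps n\rfloor\ \text{with}\ m\le n\,\bigr\}.
\]
Exactly as before, $\P'$ is enumerable from $w$ together with the $O(\eps^{-1})$ many quantities $K_n(x),K_{n,m}(x\mid x)$, each of length $O(\log n)$, and $\P'$ contains the cube $Q(x)$ containing $x$. A compressibility argument then yields
\[
\vert\P'\vert\ \ge\ 2^{K_n(x)-O(\eps^{-1}\log n)-|w|}.
\]

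For the Frostman bound, I would fix $m\le n$ an integer multiple of $\lfloor\eps n\rfloor$. Any subcube of a given $Q\in\D_{2^{-m}}$ lying in $\P'$ satisfies $K_{n,m}(Q'\mid Q)=K_{n,m}(Q'\mid Q')\lessapprox K_{n,m}(x\mid x)$, so there are at most $2^{K_{n,m}(x\mid x)+O(\log n)}$ of them; by symmetry of information this is at most $2^{K_n(x)-K_m(x)+O(\log n)}$. Here is the only place where the hypothesis enters: since $m\ge\lfloor\eps n\rfloor\ge\eps n$ (for $n$ large), the definition of $t$ gives $K_m(x)\ge tm$, so the number of subcubes is at most
\[
2^{K_n(x)-tm+O(\log n)}\ \le\ \vert\P'\vert\,2^{-tm+O(\eps^{-1}\log n)+|w|}.
\]
Finally, for a general $m\in[\eps n,n]$ not a multiple of $\lfloor\eps n\rfloor$, I would round down to the nearest such multiple; this loses a factor of at most $2^{O(\eps n)}$ in the count of subcubes, so the final Frostman constant is $C=2^{O(\eps n)+O(\eps^{-1}\log n)+|w|}=2^{O(\eps n)+|w|}$, matching the statement.

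There is no real obstacle: the only substantive change from Proposition \ref{prop:enumerableSset} is bookkeeping about which $m$ the bound $K_m(x)\ge tm$ is applied to, and the definition $t=\min\{K_m(x)/m:\eps n\le m\le n\}$ is tailored so that this range is precisely the range that appears. The "in particular" clause is immediate by absorbing $|w|=O(\log n)$ into the $O(\eps^{-1}\log n)$ and $O(\eps n)$ error terms.
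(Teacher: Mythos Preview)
Your proposal is correct and is precisely what the paper does: the paper omits the proof entirely, noting only that it is nearly identical to that of Proposition~\ref{prop:enumerableSset}, with the sole change being that the minimum of $K_m(x)/m$ over the finite window $[\eps n,n]$ replaces the use of the dimension. One small slip: you wrote $\lfloor\eps n\rfloor\ge\eps n$, which is backwards, but this is harmless since the borderline case $m=\lfloor\eps n\rfloor<\eps n$ differs from $m=\lceil\eps n\rceil$ by $O(1)$ bits and is absorbed into the $O(\eps n)$ error.
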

The proof of this proposition is nearly identical to Proposition \ref{prop:enumerableSset}, and so we omit it. The only difference is that we are only considering the minimum of the ratio $K_m(x)/m$ \textit{up to precision $n$} instead of the limit inferior, i.e., the dimension.

\begin{theorem}\label{thm:effthm2}
    Let $x\in \R^2$, and let $\ell$ be a line in $\R^2$ such that $x\in\ell$. Let $\eps > 0$ and for each $n\in\N$, define $s_n = \min\{\frac{K_{m,n}(e\mid x)}{m} \mid \epsilon n \leq m \leq n\}$. Let $\dim_H(x) > t > 0$ and let $\liminf_n s_n > s > 0$. Let $n$ be sufficiently large. Then,
    \begin{equation}
        K_n(\ell) \geq K_n(\ell \mid x) + \min\{t, \frac{s+t}{2}, 1\}n - O(\epsilon n)-O_{\eps,s,t}(\log n).
    \end{equation}
\end{theorem}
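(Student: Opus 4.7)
My plan is to parallel the proof of Theorem \ref{thm:effthm1} step-by-step, swapping out each oracle-based ingredient for its finite-scale conditional counterpart. Concretely, in the two places where Theorem \ref{thm:effthm1} invokes Proposition \ref{prop:enumerableSset}, I would invoke Proposition \ref{prop:enumerableSset2} for the point set $\P$, and the analogous conditional variant of Proposition \ref{prop:enumerableSset2} (the one asserted in the paragraph following its statement, with $K_{m,n}(\ell\mid x)$ playing the role of the oracle quantity $K_m^x(\ell)$) for the set $\T(Q(x))$ of tubes. The two hypotheses $\dim_H(x) > t$ and $\liminf_n s_n > s$ guarantee, for all sufficiently large $n$, the inequalities $\min\{K_m(x)/m : \eps n \leq m \leq n\} > t$ and $s_n > s$ respectively, which are exactly what these two enumerability statements require on input.

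After fixing rational $\eps,s,t$ and a small rational $\eta_0$ to be chosen at the end, I would first set $\T = \{T \in \T^{2^{-n}} : K(T) \lessapprox K_n(\ell)\}$, so that $|\T| = 2^{K_n(\ell) + O(\log n)}$, and write $K_n(\ell \mid x) \lessapprox \log n_{Q(x)} + O(\log n)$ with $n_Q = \#\{T \in \T : T \cap Q \neq \emptyset\}$. Applying the conditional variant of Proposition \ref{prop:enumerableSset2} to the dual of $\T(Q(x))$ (enumerable from $Q(x)$ and $K_n(\ell)$, hence from $O(\log n)$ bits of side information when $x$ is in hand) produces a dyadic $(2^{-n}, s, 2^{O(\eta_0 n)})$-subset $\T^\prime$ with $|\T^\prime| \geq 2^{K_n(\ell\mid x) - O(\eta_0^{-1}\log n)}$. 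This shows $Q(x)$ lies in the set $\P$, defined exactly as in the proof of Theorem \ref{thm:effthm1}, and $\P$ is itself enumerable from $O(\log n)$ bits. Applying Proposition \ref{prop:enumerableSset2} to $\P$ then extracts a dyadic $(2^{-n}, t, 2^{O(\eta_0 n)})$-subset $\P^\prime \subseteq \P$ with $|\P^\prime| \geq 2^{K_n(x) - O(\eta_0^{-1}\log n)}$. Choosing $\eta_0$ small enough to apply Theorem \ref{thm:RenWang} with parameters $s,t,\eps$ and $\eta \sim \eta_0$ then yields
\[
K_n(\ell) \gtrapprox \log|\T| \;\geq\; K_n(\ell\mid x) + \min\!\left\{t,\tfrac{s+t}{2},1\right\}\!n \,-\, O(\eps n) \,-\, O(\eta^{-1}\log n),
\]
which is the stated bound.

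The main obstacle is verifying the conditional analog of Proposition \ref{prop:enumerableSset2} used in the second step. Its proof should mirror that of Proposition \ref{prop:enumerableSset} almost verbatim, but with symmetry of information applied in conditional form: in place of $K_{n,m}(x\mid x) \lessapprox K_n(x) - K_m(x)$, one needs $K_{n,m}(\ell\mid x,\ell) \lessapprox K_n(\ell\mid x) - K_{m,n}(\ell\mid x)$ up to $O(\log n)$, which is the standard conditional generalization of the Lutz--Stull identity. Granted this, the enumerability and Frostman counting arguments go through unchanged, and all error terms accumulate into the claimed $O(\eps n) + O_{\eps,s,t}(\log n)$.
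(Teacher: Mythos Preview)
Your proposal is correct and matches the paper's approach exactly: the paper's own ``proof'' of Theorem~\ref{thm:effthm2} is the single sentence ``Again, the proof of this is nearly identical to Theorem~\ref{thm:effthm1},'' and you have accurately unpacked what that entails, including the one genuinely new ingredient (the conditional-complexity variant of Proposition~\ref{prop:enumerableSset2} for the tube family, justified by conditional symmetry of information). Note that for the point family $\P$ you could equally well cite Proposition~\ref{prop:enumerableSset} directly, since the hypothesis $\dim_H(x)>t$ is unchanged from Theorem~\ref{thm:effthm1}; only the tube step actually requires the finite-scale conditional version.
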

Again, the proof of this is nearly identical to Theorem \ref{thm:effthm1}. 

\section{Proof of main theorem}
We now prove Theorem \ref{thm:mainthm1} using the algorithmic results of the previous section. To do so, we will need to prove the existence of an appropriate oracle $A$ and points satisfying the following properties. For a (suitably chosen) oracle $A$ we need to find $x, y\in\R^2$ and $e = \frac{y-x}{\vert x - y\vert}$ that satisfy:
\begin{itemize}
\item[\textup{(C1)}] $\dim_H^A(x) > \sigma_X, \dim_H^A(y) > \sigma_Y$;
\item[\textup{(C2)}] $\dim_H^{A,x}(e) > 0$;
\item[\textup{(C3)}] $K^{A, x}_n(y) \gtrapprox K^{A}_n(y)$,
\end{itemize}
where the parameters $\sigma_X$ and $\sigma_Y$ we will specify later. 

In \cite{FieStu24} (Lemma 33), Fiedler and Stull used Theorem \ref{cor:OrpShmWan} to give sufficient conditions for the existence of points satisfying the above conditions.
A straight-forward modification of this result proves the following lemma. For completeness, we include the proof.

\begin{lemma}\label{lem:C1-C3}
Let $X, Y\subseteq\mathbb{R}^2$ be compact sets. Let $\sigma_X <\dim_H(X)$ and $\sigma_Y < \dim_H(Y)$. Let $\mu, \nu$ be probability measures whose support is $X$ and $Y$, respectively, and such that $\mu$ and $\nu$ satisfy the $\dim_H(X)$ -dimensional and $\dim_H(Y)$-dimensional Frostman condition, respectively. Let $A$ be an oracle relative to which $X$ and $Y$ are effectively compact and relative to which $\mu$ and $\nu$ are computable.

    Assume $\mu(\ell)\nu(\ell)<1$ for every line $\ell$ and $\nu$ is computable with respect to $A$.  Then, 
\begin{center}
    $\dim_H\{x\in X: \exists y\in Y \text{ satisfying (C1)-(C3)}\}>\sigma_X$.
\end{center}
\end{lemma}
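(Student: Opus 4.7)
The plan is to verify each of (C1)-(C3) as an almost-sure event under the product measure $\mu \times \nu$, restricted to the thin-tubes set $E$ produced by Theorem~\ref{cor:OrpShmWan}, and then project onto the first coordinate to obtain a positive-$\mu$-measure set of $x$'s. Since $\mu$ is $\dim_H(X)$-Frostman, any set of positive $\mu$-measure automatically has Hausdorff dimension at least $\dim_H(X) > \sigma_X$, yielding the conclusion.

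First I would, without loss of generality, enlarge $A$ so that it additionally encodes the Borel set $E$ supplied by Theorem~\ref{cor:OrpShmWan} (applied with any fixed rational $\sigma$ satisfying $0 < \sigma < \min\{\dim_H(X),\dim_H(Y)\}$). This enlargement is harmless: $X, Y$ remain effectively compact relative to $A$, and $\mu, \nu$ remain computable relative to $A$. The hypothesis $\mu(\ell)\nu(\ell)<1$ guarantees that $E$ has positive $\mu \times \nu$ measure and satisfies the rectangle estimate $\nu(\{y \in R : (x,y) \in E\}) \leq C r^\sigma$.

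Next I would verify each condition on a full $\mu \times \nu$-measure subset of $E$. For (C1), the $\dim_H(X)$-Frostman property of $\mu$ together with $A$-computability of $\mu$ gives $\dim_H^A(x) \geq \dim_H(X) > \sigma_X$ for $\mu$-a.e.\ $x$ by the standard effective-dimension argument, and symmetrically for $y$. For (C3), equation \eqref{dd} applied to $\nu$ with additional oracle $x$ yields $K_n^{A,x}(y) \gtrapprox K_n^A(y)$ for $\nu$-a.e.\ $y$, for every $x$; Fubini upgrades this to full $\mu \times \nu$-measure. For (C2), fix any $x$ in the $\mu$-projection of $E$; the preimage $\pi_x^{-1}(B(e,r)) \cap Y$ is contained in a rectangle of width $O(r)$ through $x$, so the rectangle estimate for $E$ translates into the $\sigma$-Frostman bound $\pi_{x\ast}(\nu|_{E_x})(B(e,r)) \lesssim r^\sigma$ on directions $e\in\mathcal{S}^1$. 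Since $E$ is now encoded in $A$, this pushforward measure is computable relative to $(A,x)$, so by the usual effective-dimension argument $\dim_H^{A,x}(e) \geq \sigma > 0$ for $\pi_{x\ast}(\nu|_{E_x})$-a.e.\ $e$. Pulling back through $\pi_x$ shows (C2) holds for $\nu$-a.e.\ $y \in E_x$.

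Intersecting these full- or positive-measure conditions, the set of $(x,y) \in E$ satisfying (C1)-(C3) has positive $\mu \times \nu$ measure; its projection to the first coordinate then has positive $\mu$-measure, hence Hausdorff dimension at least $\dim_H(X) > \sigma_X$. The main technical subtlety is (C2): one has to ensure the set $E$ can be folded into the oracle $A$ so that $\pi_{x\ast}(\nu|_{E_x})$ is genuinely computable relative to $(A,x)$, as this is what makes the effective Frostman argument go through. The remainder is routine bookkeeping given the cited results from \cite{OrpShmWan24} and \cite{FieStu24}.
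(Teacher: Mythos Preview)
Your overall strategy---restrict to the thin-tubes set $E$, verify (C1)--(C3) almost surely, then project---matches the paper's. The gap is in your treatment of (C2), and you have correctly flagged it yourself: the claim that $\pi_{x\ast}(\nu|_{E_x})$ can be made computable relative to $(A,x)$ by ``folding $E$ into $A$'' is not justified and is in fact the wrong route. The set $E$ produced by Theorem~\ref{cor:OrpShmWan} is merely Borel; there is no reason its sections $E_x$ should be uniformly computably $\nu$-measurable in $x$, so encoding a Borel code for $E$ into the oracle does not give you computability of $\nu(E_x\cap\,\cdot\,)$, and hence not of the pushforward. A related issue: $A$ is fixed in the hypotheses and (C1)--(C3) are stated relative to that $A$, so you cannot freely enlarge it without arguing that the conditions transfer back (they do for (C1) and (C2) by monotonicity of effective dimension in the oracle, and (C3) follows from \eqref{dd} with the original $A$ directly---but this should be said, not glossed over).

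The paper avoids the computability-of-pushforward problem entirely. Once the thin-tubes estimate gives the \emph{classical} bound $\dim_H(\pi_x E')\geq\sigma'$ (via Frostman's lemma, exactly as you do), the paper simply observes that for any oracle $B$ the set $\{e:\dim_H^{B}(e)<\sigma''\}$ has Hausdorff dimension at most $\sigma''$. Taking $B=(A,x)$ and $\sigma''<\sigma'$, this exceptional set cannot exhaust $\pi_x E'$, so some $e\in\pi_x E'$ has $\dim_H^{A,x}(e)\geq\sigma''>0$; pull back to a $y\in E'$. No computability of $E$, of $E_x$, or of any restricted or pushforward measure is needed---only the elementary fact that low-effective-dimension points form a classically low-dimensional set. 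Replacing your pushforward-computability step with this one-line argument completes your proof.
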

\begin{proof}
Fix $\sigma<\sigma' < \min\{\sigma_X,\sigma_Y, 1\}$. Then $\mu$ and $\nu$ satisfy the $\sigma'$-dimensional Frostman condition. Let $E$ be the Borel set given by Theorem \ref{cor:OrpShmWan}, applied with $\sigma'$. We denote the vertical sections of $E$ by
$$E_x:=\{y\mid (x,y)\in E\}.$$
Since $(\mu \times \nu)(E) >0$, the set
\begin{center}
    $X_1:= \{x\in X\mid \nu(E_x) > 0\}$
\end{center}
has positive $\mu$-measure, and since $\mu$ satisfies the $\dim_H(X)$-dimensional Frostman condition, therefore this implies that the dimension of $X_1$ is $\dim_H(X)$. Since $\sigma_X<\dim_H(X)$, therefore 

\begin{center}
    $X_2:= \{x\in X_1 \mid \dim_H^A(x) > \sigma_X\}$
\end{center}
also has Hausdorff dimension $\dim_H(X)$.

Let $x$ be any point in $X_2$. We know that $\nu$-a.e. $y$ satisfies the condition in (C1), and since $\nu$ is computable with respect to $A$, therefore using \eqref{dd} we also know that $\nu$-a.e. $y$ satisfies (C3). Let
$E^\prime \subseteq E_x$ be the set of points in $E_x$ satisfying both (C1) and (C3). Then we have $\nu(E^\prime) = \nu(E_x) >0$. 

By Theorem \ref{cor:OrpShmWan}, we have $\nu(R\cap E^\prime) \leq C r^{\sigma'}$ for all $r > 0$ and all rectangle $R$ of width $r$ containing $x$. Using very simple geometry, this, combined with Frostman's lemma, implies that $\dim_H(\pi_x E^\prime) \geq \sigma'$. Then for every $\sigma<\sigma''<\sigma'$, for most $e\in \pi_x E'$ (all $e\in \pi_x E'$ except for an at most $\sigma''$-dimensional subset) we have $\dim_H^{A,x}(e) \ge \sigma''>\sigma$.

In particular, there is an $e\in \pi_x E^\prime$ satisfying (C2). Let $y \in E^\prime$ such that $e = \frac{y-x}{\vert x - y\vert}$. Then this choice of $x$ and $y$ satisfies (C1)-(C3).
\end{proof}

\begin{remark}
    Note that the proof of Lemma \ref{lem:C1-C3} gives a stronger result. Not only can we find a points $x$ and $y$ satisfying (C1), (C2) and (C3), but, for every $\sigma < \min\{\sigma_X, \sigma_Y, 1\}$, we can find points satisfying (C1) and (C3) such that 
    \begin{center}
        $\dim^{A,x}_H(e) > \sigma$.
    \end{center}
\end{remark}

\begin{theorem}\label{thm:boundComplexityLine}
    Under the assumptions of Lemma \ref{lem:C1-C3}, there are points $x\in X$ and $y\in Y$ such that $\dim_H^A(x) > \sigma_X, \dim_H^A(y) > \sigma_Y$ and $\dim_H^{A,x}(\ell) > 0$, where $\ell$ is the line joining $x$ and $y$, satisfying the following for all sufficiently large $n$ and $m\leq n$.
\begin{equation*}
K_m^A(\ell)\lessapprox K_{m}^A(\ell\mid x)+K_{m,n}^A(\ell\mid y).
\end{equation*}
Similarly, we have
\begin{equation*}
K_n^{A}(\ell)\lessapprox K_{n}^A(\ell\mid y)+K_{n}^{A,x}(\ell).
\end{equation*}
\end{theorem}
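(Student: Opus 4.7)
The plan is to derive both inequalities from the Lutz--Stull symmetry of information, combined with conditions (C1)--(C3) from Lemma \ref{lem:C1-C3} and the fact that the line $\ell$ is computable from the pair $(x,y)$. First I would invoke Lemma \ref{lem:C1-C3} to obtain $x\in X$ and $y\in Y$ satisfying (C1)--(C3); since the line $\ell$ and the direction $e=\pi_x y$ are computably interchangeable once $x$ is given as an oracle, condition (C2) yields $\dim_H^{A,x}(\ell)>0$. Moreover, for the fixed pair $(x,y)$ produced by the lemma, $|x-y|$ is a fixed positive constant, so for every sufficiently large $n$ and every $m\leq n$ the line $\ell$ can be computed at precision $m$ from an oracle for $x$ together with $y$ at precision $n$, giving the key auxiliary estimate
\[K^{A,x}_{m,n}(\ell\mid y) = O(\log n).\]

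For the first inequality, I would apply symmetry of information to the pair $(\ell,y)$ at precisions $(m,n)$,
\[K^A_{m,n}(\ell,y)\approx K^A_m(\ell)+K^A_{n,m}(y\mid\ell)\approx K^A_n(y)+K^A_{m,n}(\ell\mid y),\]
so that after rearranging it suffices to show $K^A_n(y)\lessapprox K^A_m(\ell\mid x)+K^A_{n,m}(y\mid\ell)$. Condition (C3) lets me pass to the oracle $(A,x)$, giving $K^A_n(y)\lessapprox K^{A,x}_n(y)$; then applying symmetry of information with oracle $(A,x)$ yields
\[K^{A,x}_n(y)\approx K^{A,x}_m(\ell)+K^{A,x}_{n,m}(y\mid\ell)-K^{A,x}_{m,n}(\ell\mid y).\]
The last term is $O(\log n)$ by the auxiliary estimate above, while $K^{A,x}_m(\ell)\approx K^A_m(\ell\mid x)$ (conditioning on $x$ is essentially the same as adding $x$ to the oracle) and $K^{A,x}_{n,m}(y\mid\ell)\leq K^A_{n,m}(y\mid\ell)+O(1)$, since enlarging the oracle can only decrease Kolmogorov complexity. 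Chaining these estimates produces the first inequality.

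The second inequality follows by exactly the same strategy specialized to $m=n$: symmetry of information gives $K^A_n(\ell)-K^A_n(\ell\mid y)\approx K^A_n(y)-K^A_n(y\mid\ell)$, and then (C3), symmetry of information with oracle $(A,x)$, and the identity $K^{A,x}_n(\ell\mid y)=O(\log n)$ reduce the remaining expression to $K^{A,x}_n(\ell)$, as required.

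The main obstacle is the careful bookkeeping of symmetry of information at the two different precisions $m$ and $n$, and ensuring that the cumulative $O(\log n)$ error terms are absorbed by $\lessapprox$. In particular, one must invoke the two-precision version of symmetry and exploit that the \emph{specific} pair $(x,y)$ selected by Lemma \ref{lem:C1-C3} has $|x-y|$ bounded below by a positive constant, so that $\ell$ is computable from $(x,y)$ to any precision up to an additive logarithmic loss depending only on this fixed separation.
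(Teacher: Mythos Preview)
Your argument is correct and follows essentially the same route as the paper: both proofs are chain-rule manipulations with the Lutz--Stull symmetry of information, using (C3) to kill the residual mutual-information term and the computability of $\ell$ from $(x,y)$ to drop $K^{A,x}_{m,n}(\ell\mid y)$. The only organizational difference is that the paper opens with the $\ell\leftrightarrow x$ symmetry and then conditions on $y$, whereas you open with the $\ell\leftrightarrow y$ symmetry and then pass to the oracle $(A,x)$; the second inequality is handled identically in both.

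One small imprecision: your parenthetical ``$K^{A,x}_m(\ell)\approx K^A_m(\ell\mid x)$ (conditioning on $x$ is essentially the same as adding $x$ to the oracle)'' is not true as a two-sided estimate---having the full oracle $x$ can genuinely give more information than $x$ at precision $m$. However, your chain only uses the inequality $K^{A,x}_m(\ell)\le K^A_m(\ell\mid x)+O(1)$, which is the trivial ``more oracle, less complexity'' direction, so the proof goes through; just replace the $\approx$ by $\lessapprox$ there.
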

\begin{proof}
Using Lemma \ref{lem:C1-C3}, gives an $x\in X$ and $y\in Y$ satisfying (C1), (C2) and (C3).

  By the symmetry of information, $$K^A_m(\ell)-K^A_{m}(\ell\mid x)\approx K^A_m(x)-K^A_m(x\mid \ell).$$ Applying the symmetry of information again,
$$K^A_m(x\mid \ell)\gtrapprox K^A_{m,m,n}(x\mid \ell,y)\approx K^A_{m,m,n}(x,\ell\mid y)-K_{m,n}(\ell\mid y).$$
Since knowing $y$ with precision $n$ and $x$ with precision $m$ also determines $\ell$ with precision $m$ (so long as $x$ and $y$ are not too close to each other, which we can assume) therefore $$K^A_{m,m,n}(x,\ell\mid y)\approx K^A_{m,n}(x\mid y).$$ Putting these inequalities together we get
$$K^A_m(\ell)- K^A_{m}(\ell\mid x)-K^A_{m,n}(\ell\mid y)\lessapprox K^A_m(x)-K^A_{m,n}(x\mid y).$$ By (C3), $K_m^A(x)\approx K_{m,n}^A(x\mid y)$, and the first equality follows.

Now we repeat essentially the same argument, except that we exchange $x$ and $y$ in the above formulas, put $n=m$, and use $x$ as an oracle.

Since $$K^A_m(\ell)-K^A_m(\ell\mid y)\approx K^A_m(y)-K^A_m(y\mid \ell)\lessapprox K^A_m(y)-K_m^{A,x}(y\mid \ell),$$
and
$$K_m^{A,x}(y\mid \ell)\approx K_{m}^{A,x}(y,\ell)-K_m^{A,x}(\ell)\approx K_{m}^{A,x}(y)-K_m^{A,x}(\ell),$$
we have
$$K^A_m(\ell)-K^A_m(\ell\mid y)-K_m^{A,x}(\ell)\lessapprox K^A_m(y)-K^{A,x}_m(y).$$
By assumption (C3), the right hand side of this is $\approx 0$, so
\begin{equation}
K_m^A(\ell)\lessapprox K_{m}^A(\ell\mid y)+K_{m}^{A,x}(\ell),
\end{equation}
and the proof is complete.
\end{proof}

We are now ready to prove our main theorem.
\begin{proof}[Proof of Theorem \ref{thm:mainthm1}]
    Let $\eps>0$, $\sigma_X < \dim_H(X)$, $\sigma_Y < \dim_H(Y)$, and let $X^\prime \subseteq X$, $Y^\prime \subseteq Y$ be compact sets such that $\dim_H(X^\prime) > \sigma_X$ and $\dim_H(Y^\prime) > \sigma_Y$. Let $\mu, \nu$ be probability measures whose support is $X^\prime$ and $Y^\prime$, respectively, and such that they satisfy the $\dim_H(X^\prime)$ and the $\dim_H(Y^\prime)$ dimensional Frostman condition, respectively.
    
    We first handle the case when $\nu(\ell) = \mu(\ell) = 1$ for some line $\ell\in\R^2$. In this case, since $X$ and $Y$ are not contained on the same line, it is immediate that
    \begin{center}
        $\sup\limits_{x\in x} \dim_H(\pi_x Y^\prime) \geq \dim_H(Y^\prime) > \sigma_Y$.
    \end{center}
    If this holds with a $\sigma_Y$ arbitrary close to $\dim_H(Y)$, then the conclusion follows. We will therefore assume that $\mu(\ell)\nu(\ell) < 1$ for every line $\ell \in \R^2$.
    
    Let $A$ be an oracle relative to which $X^\prime$ and $Y^\prime$ are computably compact, and such that $\nu$ and $\mu$ are computable. 

    By Theorem \ref{thm:boundComplexityLine}, there are points $x\in X^\prime$ and $y\in Y^\prime$ satisfying (C1) and (C2), and such that
    \begin{equation}\label{lxy}
K_m^A(\ell)\lessapprox K_{m}^A(\ell\mid x)+K_{m,n}^A(\ell\mid y),
\end{equation}
    and
    \begin{equation}\label{exy2}
K_n^A(\ell)\lessapprox K_{n}^A(\ell\mid y)+K_{n}^{A,x}(\ell),
\end{equation}
for all sufficiently large $n$ and $m$, where $m\leq n$ and $\ell$ is the line joining $x$ and $y$. 

Denoting $t_x = \dim^A_H(x)$ and $s_x = \dim_H^{A,x}(\ell)=\dim_H^{A,x}(e)$, since $s_x > 0$, from \eqref{lxy} and
Theorem \ref{thm:effthm1} we get
\begin{align}
    K^A_{m,n}(\ell\mid y) &\gtrapprox K_m^A(\ell)-K_m^A(\ell\mid x)\tag*{}\\
    &\gtrapprox \min\{t_x, (s_x+t_x)/2, 1\}m - O(\eps m)-O_{\eps,s_x,t_x}(\log m)\label{sy},
\end{align}
for sufficiently large $n$ and $m$ where $m \leq n$. 

Denote $t_y = \dim^A_H(y)$. For every $n$, let $s_n = \min\{\frac{K^A_{m,n}(\ell \mid y)}{m} \mid \epsilon n \leq m \leq n\}$. Let $s_y = \liminf_n s_n$, and note that \eqref{sy} implies that $s_y > 0$. 

We now fix a sufficiently large $n\in\N$ such that $s_n \sim s_y$. Choosing $m \geq \eps n$ to be a precision at which $K^A_{m,n}(\ell\mid y) = K^A_{m,n}(e\mid y) \sim s_y m$, from \eqref{sy} we see that
     \begin{align}
         s_y &\ge\min\{t_x, (s_x+t_x)/2, 1\} -O(\eps)- \frac{O_{\eps,s_x,t_x}(\log n)}{m}\tag*{}\\
         &\ge\min\{t_x, (s_x+t_x)/2, 1\} -O(\eps)\label{eq:1}.
     \end{align} 
     
For any sufficiently large $m\in\N$, by \eqref{exy2} and Theorem \ref{thm:effthm2} we get
     \begin{align}
         K^{A,x}_{m}(\ell) &\gtrapprox K_m^A(\ell)-K_m^A(\ell\mid y)\tag*{}\\
         &\gtrapprox \min\{t_y, (s_y+t_y)/2, 1\}m -O(\eps m)-O_{\eps,s_y,t_y}(\log m)\label{sy2}.
     \end{align}

Similarly, choosing $m$ to be a precision at which $K^{A,x}_m(e) \sim s_x m$, from \eqref{sy2} 
we get
     \begin{align}
         s_x &\ge\min\{t_y, (s_y+t_y)/2, 1\} - O(\eps) - \frac{O_{\eps,s_y,t_y}(\log m)}{m}\tag*{}\\
         &\geq\min\{t_y, (s_y+t_y)/2, 1\} -O(\eps)\label{eq:2} .
     \end{align}

Plugging in  \eqref{eq:1} into \eqref{eq:2} we get:
\begin{equation}\label{15} s_x\ge\min\{t_y,\frac{t_x+t_y}2,\frac{\frac{s_x+t_x}{2}+t_y}2,\frac{1+t_y}2,1\} - O(\eps).
\end{equation}

This implies that
\begin{equation}\label{sx} 
s_x\ge \min\{t_y,\frac{t_x+t_y}{2},1\} -O(\eps).
\end{equation} 
Indeed, in \eqref{15} we have $\frac{1+t_y}{2}\ge\min\{1,t_y\}$, so either \eqref{sx} holds or we have $$s_x\ge \frac{\frac{s_x+t_x}{2}+t_y}{2}-O(\eps).$$ The latter one implies that $$3s_x\ge t_x+2t_y -O(\eps) =2\frac{t_x+t_y}{2}+t_y-O(\eps),$$ so in this case $s_x\ge \min\{\frac{t_x+t_y}{2},t_y\}-O(\eps)$ and therefore \eqref{sx} holds.

From \eqref{sx} the statement of Theorem \ref{thm:mainthm1} follows, since
\begin{align}
   \dim_H^{A,x}(\pi_x y)&=s_x\tag*{}\\
   &\ge \min\{t_y,\frac{t_x+t_y}{2},1\} - O(\eps)\tag*{}\\
   &\ge \min\{\sigma_Y,\frac{\sigma_X+\sigma_Y}{2},1\} - O(\eps).\tag*{}
\end{align}
By \eqref{don}, this implies that $$\dim_H(\pi_x Y)\ge \min\{\sigma_Y, \frac{\sigma_X+\sigma_Y}{2},1\} - O(\eps).$$ 

Since for every $\eps > 0$, and for every $\sigma_X<\dim_H(X)$, $\sigma_Y<\dim_H(Y)$ there is an $x$ for which this holds, the conclusion follows.

\end{proof}

\section*{Acknowledgment}
We thank Pablo Shmerkin for very valuable comments and suggestions.

\bibliographystyle{amsplain}
\bibliography{radialproj}

\end{document}